\documentclass[12pt,twoside,a4paper]{amsart}

\usepackage[margin=2.5cm]{geometry}

\usepackage{amssymb,mathtools}
\usepackage[dvipsnames]{xcolor}
\usepackage{hyperref}

\usepackage{float}

\newtheorem{thm}{Theorem}[section]
\newtheorem{lem}[thm]{Lemma}
\newtheorem{prop}[thm]{Proposition}
\newtheorem{cor}[thm]{Corollary}
\theoremstyle{definition}
\newtheorem{defn}[thm]{Definition}

\newtheorem{conjecture}[thm]{Conjecture}

\numberwithin{equation}{section}
\allowdisplaybreaks

\usepackage[color=Orange!50!white, textsize=scriptsize]{todonotes}

\newcommand{\RR}{\mathbb{R}}

\begin{document}

\keywords {Kakeya problem, polynomial method, projection theory}
\subjclass[2010]{28A80 (primary), 44A12 (secondary)}

\title[On the Peres--Schlag orthogonal projection problem]{On the Peres--Schlag orthogonal projection problem \\and Kakeya-type sets}

\author{Guo-Dong Hong} 
\address{Department of Mathematics, California Institute of Technology, Pasadena, CA 91125,
USA}
\email{ghong@caltech.edu}

\author{Chong-Wei Liang}
\address{Department of Mathematics, National Taiwan University, Taiwan.}
\email{d10221001@ntu.edu.tw}

\author{Chun-Yen Shen} 
\address{Department of Mathematics, National Taiwan University, Taiwan.}
\email{cyshen@math.ntu.edu.tw}
\date{}

\begin{abstract}
We investigate the Peres--Schlag nonempty interior problem for orthogonal projections in both the finite-field and Euclidean settings. Over finite fields $\mathbb F_q^n$, we employ the polynomial method to establish sharp projection results, and uncover a new connection with stability versions of the finite-field \((n,m)\)-set problem. Over Euclidean spaces $\mathbb R^n$, we obtain improved nonempty interior results beyond those of Peres and Schlag in certain parameter ranges. Our proof combines techniques from geometric measure theory and harmonic analysis, including $L^p$-estimates for Kakeya maximal operators and maximal $k$-plane transforms.
\end{abstract}
\maketitle

\section{Introduction}

A central problem in geometric measure theory is to understand how the size and structure of a set are reflected in its orthogonal projections \cite{MR3558147,MR0248779,MR0409774,MR0063439}. The classical Marstrand--Mattila projection theorem asserts that if \(E \subseteq \mathbb{R}^n\) has Hausdorff dimension \(s\), then for almost every \(l\)-dimensional subspace \(V \in G(n,l)\), the orthogonal projection of \(E\) onto \(V\) has Hausdorff dimension \(\min(s,l)\). A more delicate question asks when typical projections have a nonempty interior. In this direction, Peres and Schlag \cite[Corollary 6.2]{MR1749437} proved the following theorem.

\begin{thm}
\label{thm:peres-schlag-euclidean}
Let \(n,l \in \mathbb{N}\), let \(s>0\), and let \(E \subseteq \mathbb{R}^n\) be a Borel set with $\dim_{\mathrm{H}} E = s$.
If \(s>2l\), then for almost every \(V \in G(n,l)\), the orthogonal projection of \(E\) onto \(V\) has a nonempty interior.
\end{thm}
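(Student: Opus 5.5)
We follow the Fourier-analytic route of Kaufman and Mattila: show that for almost every $V\in G(n,l)$ the pushforward of a suitable Frostman measure under the projection $P_V$ has an $L^2$-Sobolev density of order larger than $l/2$. Such a density is continuous, and a nonzero continuous nonnegative function is positive on an open set.

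\textbf{Step 1 (Frostman measure).} Since $s>2l$, we fix $t$ with $2l<t<s$. By Frostman's lemma (valid for Borel sets, after passing to a compact subset of positive $\mathcal H^t$-measure) there is a nonzero compactly supported Borel measure $\mu$ on $E$ with $\mu(B(x,r))\lesssim r^t$. Its $t'$-energy
\[
I_{t'}(\mu)=c\int_{\RR^n}|\hat\mu(\xi)|^2|\xi|^{t'-n}\,d\xi
\]
is then finite for every $t'<t$; we choose $t'$ with $2l<t'<t$.

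\textbf{Step 2 (averaging over the Grassmannian).} Let $\mu_V=(P_V)_\#\mu$, a measure on $V\cong\RR^l$. Its Fourier transform is $\widehat{\mu_V}(\eta)=\hat\mu(\eta)$ for $\eta\in V$. For $\sigma>0$ we set
\[
J(\sigma)=\int_{G(n,l)}\int_V|\hat\mu(\eta)|^2(1+|\eta|)^{2\sigma}\,d\eta\,d\gamma_{n,l}(V).
\]
By the standard integral-geometric identity, for nonnegative $f$ on $\RR^n$,
\[
\int_{G(n,l)}\int_V f(\eta)\,d\eta\,d\gamma_{n,l}(V)=c\int_{\RR^n}f(\xi)|\xi|^{l-n}\,d\xi,
\]
which follows from polar coordinates and rotation invariance. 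Hence
\[
J(\sigma)\approx\int_{\RR^n}|\hat\mu(\xi)|^2(1+|\xi|)^{2\sigma}|\xi|^{l-n}\,d\xi .
\]
This is bounded by a constant times $\|\mu\|^2+I_{t'}(\mu)<\infty$ as soon as $2\sigma+l\le t'$. The bounded set $|\xi|\le1$ contributes finitely because $|\hat\mu|\le\|\mu\|$ and $|\xi|^{l-n}$ is integrable there. We take $\sigma=(t'-l)/2$. Then $\sigma>l/2$ exactly because $t'>2l$. So for a.e.\ $V$ we have $\mu_V\in H^\sigma(V)$ with $\sigma>l/2$.

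\textbf{Step 3 (continuity and interior).} For such $V$, Cauchy--Schwarz gives
\[
\int_V|\widehat{\mu_V}|\le\Big(\int_V|\widehat{\mu_V}|^2(1+|\eta|)^{2\sigma}\,d\eta\Big)^{1/2}\Big(\int_V(1+|\eta|)^{-2\sigma}\,d\eta\Big)^{1/2}<\infty,
\]
since $2\sigma>l$. So $\widehat{\mu_V}\in L^1(V)$ and $\mu_V$ has a continuous density $f_V$. This density is nonnegative with $\int f_V=\mu(\RR^n)>0$, so $\{f_V>0\}$ is a nonempty open subset of $V$. That set lies in $\operatorname{supp}\mu_V\subseteq P_V(\operatorname{supp}\mu)\subseteq P_V(E)$, where the middle inclusion uses that $\operatorname{supp}\mu$ is compact and so $P_V(\operatorname{supp}\mu)$ is closed. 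This gives the nonempty interior.

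The main obstacle is Step 2, specifically the exact exponent bookkeeping in the Grassmannian averaging identity; the threshold $s>2l$ enters only there. Everything else is standard, assuming Frostman's lemma for Borel sets and the rotation-invariant measure $\gamma_{n,l}$ on $G(n,l)$.
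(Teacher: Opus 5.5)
Your proof is correct. It is the standard $L^2$--Sobolev argument: for a.e.\ $V$ the projected measure has Sobolev dimension $>2l$, hence a continuous density. This argument underlies \cite[Corollary~6.2]{MR1749437}, and that citation is all the paper gives for Theorem~\ref{thm:peres-schlag-euclidean}, which it does not reprove.

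The paper's own recasting in Section~\ref{sec:continuous-construction} works differently. It replaces your single step $\widehat{\mu_V}\in L^1(V)$ with dyadic $L^\infty$ bounds on Littlewood--Paley pieces, obtained from maximal plane transforms and \eqref{integralformula1}. At $p=2$, with the elementary bound $\boxdot=(l-1)/2+\varepsilon$ in $\mathbb{R}^{l+1}$, Theorem~\ref{nonemptyimprove} returns the same threshold $2l$. That detour only pays off once $L^p$ Kakeya estimates with $p>2$ are inserted.
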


When \(l=1\), this threshold is sharp. Indeed, Besicovitch line-set constructions in the plane, combined with a rational-translate argument of Mattila \cite{MR2044636} (see also \cite{MR1749437}), produce two-dimensional sets with the property that every projection onto lines contains no intervals. Higher-dimensional examples at the same threshold may similarly be obtained by taking products with Euclidean spaces.
For projections onto \(l\)-planes with \(l \geq 2\), however, the sharp threshold remains unclear. Peres--Schlag \cite[Chapter 6]{MR1749437} observed that the exponent \(2l\) may not be optimal once \(l \geq 2\).

The present paper develops the Peres--Schlag projection problem in both finite-field and Euclidean settings. We first study finite-field analogues of the projection problem. Our results show that the sharp threshold over finite fields is \(l+1\), rather than the Peres--Schlag exponent \(2l\). The argument reveals a new connection between projection problems and finite-field Kakeya-type sets \cite{MR2525780,MR3143848,MR2604979}.

Next, we return to the Euclidean setting. We observe that the argument of Peres and Schlag \cite{MR1749437} may be reformulated in terms of \(L^2\)-bounds for maximal \(k\)-plane transforms controlled by Riesz energy. Combining this observation with the framework of Bourgain \cite{MR1097257} and the current best Kakeya maximal estimates \cite{BCCLXZ,MR4521046,MR3868003,MR4201413,MR1363209}, we obtain improved nonempty interior results for orthogonal projections in certain parameter ranges.

\subsection{Finite-field setting}

Throughout this paper, \(q\) denotes a prime power and \(\mathbb{F}_q\) denotes the finite field with \(q\) elements.
We write \(G(n,l)(\mathbb{F}_q)\) for the Grassmannian of \(l\)-dimensional linear subspaces of \(\mathbb{F}_q^n\). For \(V \in G(n,l)(\mathbb{F}_q)\), define the projection
\[
\Pi_V : \mathbb{F}_q^n \longrightarrow \mathbb{F}_q^n/V^\perp\quad\text{by}\quad \Pi_V(x) := x + V^\perp,
\]
where \(V^\perp\) denotes the orthogonal complement of \(V\) with respect to the standard bilinear form on \(\mathbb{F}_q^n\). Thus, \(\Pi_V(E)\) is the set of cosets of \(V^\perp\) that intersect \(E\). We say that \(\Pi_V(E)\) is \emph{full} if
\begin{align}\label{analoguesfull}
\Pi_V(E)=\mathbb{F}_q^n/V^\perp.
\end{align}
Since $
\mathbb{F}_q^n/V^\perp \cong \mathbb{F}_q^l$,
(\ref{analoguesfull}) is the finite-field analogue of saying that the projection has a nonempty interior.

Finite-field analogues of projection theorems over prime fields were first studied by Chen \cite{MR3753167}. Chen proved a finite-field analogue of the Peres--Schlag \(2l\)-threshold: if a set \(E \subseteq \mathbb{F}_p^n\) satisfies
\[
|E| \gtrsim p^s\quad\text{with}\quad s>2l,
\] then for almost every \(l\)-dimensional finite-field projection, the image of \(E\) is full. Thus, Chen's result gives a finite-field version of the Peres--Schlag exponent \(2l\).

Our first main result substantially improves this threshold.

\begin{thm}
\label{thm:main-projection}
Let \(2\le l<n\), and let \(E\subseteq\mathbb F_q^n\). Suppose that
\[
|E|\gtrsim q^s,\quad\text{for some}\,\,s>l+1.
\] Then for \(1-o_q(1)\) many \(V\in G(n,l)(\mathbb F_q)\),
\[
\Pi_V(E)=\mathbb F_q^n/V^\perp.
\]
 Equivalently,
\[
\left|
\left\{
V\in G(n,l)(\mathbb F_q):
\Pi_V(E)\neq \mathbb F_q^n/V^\perp
\right\}
\right|
=
o_q(1)\,|G(n,l)(\mathbb F_q)|.
\]
\end{thm}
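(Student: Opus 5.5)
The plan is to argue by contradiction and convert the failure of fullness into a finite-field Kakeya-type configuration. Suppose the exceptional set $\mathcal B := \{V : \Pi_V(E)\neq \mathbb F_q^n/V^\perp\}$ satisfies $|\mathcal B|\ge \delta\,|G(n,l)(\mathbb F_q)|$ for some fixed $\delta>0$ along a sequence of $q\to\infty$. For each $V\in\mathcal B$ there is a coset $x_V+V^\perp$ with $(x_V+V^\perp)\cap E=\emptyset$. This coset is an affine plane of dimension $m:=n-l\ge 1$. Let
\[
K:=\bigcup_{V\in\mathcal B}(x_V+V^\perp).
\]
Then $K$ is an ``$(n,m)$-set with a $\delta$-fraction of directions'', since $V\mapsto V^\perp$ is a bijection $G(n,l)\to G(n,m)$. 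Moreover $E\subseteq \mathbb F_q^n\setminus K$. The theorem therefore reduces to the following stability statement for $(n,m)$-sets: \emph{if $K$ contains an affine $m$-plane parallel to $W$ for each $W$ in a family $\Omega\subseteq G(n,m)(\mathbb F_q)$ with $|\Omega|\ge\delta|G(n,m)|$, then $|\mathbb F_q^n\setminus K|\le C_{\delta,n}\,q^{n-m+1}=C_{\delta,n}\,q^{l+1}$.} Given this, $q^s\lesssim|E|\le C_\delta q^{l+1}$ with $s>l+1$ is a contradiction for large $q$. Hence $|\mathcal B|=o_q(1)|G(n,l)|$.

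To prove the stability statement I would follow the polynomial method with multiplicities, in the spirit of Dvir and of Kopparty--Lev--Saraf--Sudan and Ellenberg--Oberlin--Tao for $(n,m)$-sets. Set $F:=\mathbb F_q^n\setminus K$ and suppose $|F|> Cq^{l+1}$. By parameter counting, choose a nonzero polynomial $P$ of degree $d$ that vanishes to order $r$ at every point of $K$; this is possible once $\binom{d+n}{n}>\binom{r+n-1}{n}|K|$. Restrict $P$ to each plane $x_W+W$. Since that plane lies in $K$, the restriction vanishes to order $r$ on all of $\mathbb F_q^m$. If $d<rq$, the restriction is identically zero. Hence the top-degree homogeneous part $P_d$, together with its Hasse derivatives of order $<r$, vanishes on the linear subspace $W$. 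This holds for every $W\in\Omega$. The union of the $W\in\Omega$ is a set $D\subseteq\mathbb F_q^n$ with $|D|\gtrsim_\delta q^n$: it covers a positive proportion of the $q^n$ points, which one checks by a second-moment count on how many $W$ contain a given point. A Schwartz--Zippel lemma with multiplicities then forces $d\gtrsim_\delta rq$.

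The remaining work is to turn the lower bound on $d$ into an upper bound on $|F|$ rather than the usual lower bound on $|K|$. Here I would refine the count by prescribing vanishing only on $K$, but viewing the relevant space as polynomials of degree $<rq$ modulo those vanishing on all of $\mathbb F_q^n$ to order $r$. The codimension this imposes is at most $\binom{r+n-1}{n}\bigl(q^n-|F|\bigr)$. The vanishing of $P_d$ on $D$ then shows that the space of such polynomials with degree in the window $[d_0,rq)$ has dimension bounded by a quantity expressible through $|F|$. Counting lattice points in the degree window, and letting $r\to\infty$ with $n$ fixed, should give $|F|\lesssim_\delta q^{n-m+1}$. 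The gain of exactly one power $q$ comes from the fact that each $W$ has dimension $m$, so $m-1$ directions of vanishing are ``free''.

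I expect this last step, the \emph{stability} bound on the complement for only a $\delta$-fraction of directions, to be the main obstacle. The classical arguments give $|K|\ge c\,q^n$ or $|K|\ge(1-o(1))q^n$ with all directions, not the sharp complement bound $q^{l+1}$ with only a proportion of directions. If the multiplicity count does not directly produce $q^{l+1}$, my fallback is to first apply the all-directions KLSS-type bound $|\mathbb F_q^n\setminus K'|\lesssim q^{n-m+1}$ to a modified set $K'$. I would obtain $K'$ by applying $O_\delta(1)$ random linear maps in $GL_n(\mathbb F_q)$ to $K$, so that the images of $\Omega$ cover all of $G(n,m)$, using transitivity and a covering argument. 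Since $F'\subseteq\bigcup_i g_i F$, this bounds $|F|$ up to a constant depending on $\delta$; the step to check is that a single image $g_iF$ is controlled by $F'$ itself.
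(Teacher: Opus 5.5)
Your reduction matches the paper's: a bad $V$ gives a coset of $V^\perp$ inside the complement of $E$. Your first polynomial steps also match: a $P$ of degree $d<rq$ vanishing to order $r$ on $K$ restricts to zero on each plane $x_W+W$, so $P_d|_W\equiv 0$. The gap is the stability bound $|F|\lesssim_\delta q^{n-m+1}$ itself, which you flag but do not prove. None of your three routes gives it.

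The refined count is just the standard one. A suitable $P$ exists as soon as $|F|>\binom{n}{2}q^n/r$. So the target bound forces $r\gtrsim_\delta q^{m-1}$ and $d$ of order $q^m$, far beyond $q$ when $m\ge 2$. Letting $r\to\infty$ with $q$ fixed cannot help: the product of all linear forms (up to scaling) vanishes identically on every proper subspace.

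In this degree range, vanishing at the points of $D=\bigcup_{W\in\Omega}W$ carries little information. For instance, $x_1^qx_2-x_1x_2^q$ vanishes on all of $\mathbb F_q^n$. Also, with only $d<rq$, the Hasse derivatives of $P_d$ forced to vanish on $W$ are those of order $<(rq-d)/(q-1)\approx r-d/q$, not all orders $<r$ as you claim. Multiplicity Schwartz--Zippel on $D$ then gives only $d\ge \frac{c_\delta}{1+c_\delta}\,rq$, i.e. $|K|\ge c'_\delta q^n$ with $c'_\delta<1$. That constant-fraction bound says nothing about $|F|$ at scale $q^{l+1}$.

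The fallback also fails. To contain planes in every direction you need $K'=\bigcup_i g_iK$, so $F'=\bigcap_i g_iF$, and a bound on this intersection does not bound $|F|$. The inclusion $F'\subseteq\bigcup_i g_iF$ only yields $|F'|\le N|F|$, which is the wrong direction. Moreover, for a general $\delta$-dense $\Omega$ you should expect to need about $\delta^{-1}\log q$ maps, not $O_\delta(1)$.

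The missing idea is to exploit that $P_d|_W\equiv0$ holds as a polynomial, which is much stronger than vanishing at the $\mathbb F_q$-points of $W$. The paper does this with a stability version of the projective vanishing lemma. If a nonzero form $Q$ of degree $D$ vanishes identically on an $\alpha$-fraction of $G(n,m)(\mathbb F_q)$, then $\alpha\le D\sum_{j=m+1}^{n}N_j^{-1}$ with $N_j=(q^j-1)/(q-1)$, so $D\gtrsim\alpha q^m$.

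The proof goes through the recursion $\alpha_s(Q)\le\alpha_{s+1}(Q)+D/N_{s+1}$. Choose a random $W\in G(n,s+1)(\mathbb F_q)$ and then a random hyperplane $V\subset W$. If $Q|_W\not\equiv0$, the defining linear form of each hyperplane of $W$ on which $Q|_W$ vanishes divides $Q|_W$, so there are at most $D$ such hyperplanes.

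With this lemma, take $M\approx c\,\alpha q^{m-1}$. Then $\deg P\le Mq-1<\tfrac12\alpha q^m$ contradicts the lemma unless $|K|\ge\binom{Mq+n-1}{n}/\binom{M+n-1}{n}\ge q^n\bigl(1-O(1/M)\bigr)$. This gives $|F|\lesssim\alpha^{-1}q^{n-m+1}$, and hence $\alpha\lesssim q^{l+1}/|E|=o_q(1)$.
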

For every \(l\ge2\), this yields a genuine improvement. Moreover, the exponent \(l+1\) is the best possible.

\begin{prop}
\label{thm:projection-sharpness}
Let \(2\le l<n-1\). Then there exists a set \(E\subseteq\mathbb F_q^n\) such that
\[
|E|
=
(1-2^{-l})q^{\,l+1}+O(q^l)
\]
and
\[
\Pi_V(E)\neq \mathbb F_q^n/V^\perp
\]
for every \(V\in G(n,l)(\mathbb F_q)\). In particular, the exponent \(l+1\) in Theorem~\ref{thm:main-projection} cannot be improved.
\end{prop}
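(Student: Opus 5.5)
The plan is to reduce the statement to a Kakeya-type construction inside an $(l+1)$-dimensional subspace, with $E$ the complement of a small Kakeya set there.

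\emph{Reformulation.} $\Pi_V(E)$ fails to be full exactly when some coset $x+W$ of $W:=V^\perp\in G(n,n-l)(\mathbb F_q)$ is disjoint from $E$. Fix a linear subspace $U\cong\mathbb F_q^{m}$ with $m=l+1$, and look for $E$ of the form $E=U\setminus K$. For any $W$ we have $\dim(W\cap U)\ge (n-l)+(l+1)-n=1$, and the traces $(x+W)\cap U$ (for those $x$ where the trace is nonempty) are exactly the translates $x'+(W\cap U)$, $x'\in U$. So it suffices to build $K\subseteq U$ with
\[
|K| = 2^{-l}q^{l+1}+O(q^{l})
\]
that contains a translate of every subspace $D\le U$ of dimension $\ge1$ arising as some $W\cap U$. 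Then $|E| = q^{l+1}-|K| = (1-2^{-l})q^{l+1}+O(q^l)$, and for every $V$ the coset $x+W$ with $(x+W)\cap U\subseteq K$ misses $E$.

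\emph{The Kakeya set.} Write points of $U$ as $(y,t)$ with $y\in\mathbb F_q^{l}$, $t\in\mathbb F_q$. For $q$ odd, take the standard union of lines
\[
L_b=\{(tb+b^{2},t): t\in\mathbb F_q\},\qquad b\in\mathbb F_q^l,
\]
where $b^2$ is taken coordinatewise. A point $(y,t)$ lies in $\bigcup_b L_b$ iff each $y_i+t^2/4$ is a square. Counting squares coordinatewise gives size $q^{l+1}/2^{l}+O(q^{l})$. To cover all horizontal directions, add the hyperplane $\{t=0\}$, which costs only $O(q^l)$. For even $q$ (characteristic $2$), I would replace squares by the analogous Dvir-type construction, using the trace/image of $z\mapsto z^2+z$, which has density $1/2$. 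This gives the same main term.

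\emph{Main obstacle.} For generic $W$, $W\cap U$ is a line and the construction above suffices. The hard case is the special $W$ with $d:=\dim(W\cap U)\ge2$, where $K$ must contain a $d$-plane in direction $D=W\cap U$.
\begin{itemize}
\item If $D\subseteq\{t=0\}$, the added hyperplane handles it.
\item Otherwise $D=\mathrm{span}\bigl((b,1),h_1,\dots,h_{d-1}\bigr)$ with $h_j$ horizontal.
\end{itemize}
For the second case, the first thing I would try is to exploit the coordinatewise structure of the membership condition. Choosing the base point on $L_b$ and asking that $y_i+t^2/4$ stay a square along the horizontal directions $h_j$ leads me to modify $K$: adjoin, for each horizontal direction class, extra low-dimensional pieces. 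Since these pieces are horizontal, they should cost only $O(q^{l})$ in total. If this fails, the fallback is to choose $U$ (or $E$ as a union of several such sets) generically enough that the degenerate intersections can be handled separately with an $O(q^l)$ correction. Verifying that these degenerate directions can always be absorbed into the $O(q^l)$ error term is the step I expect to require the most care.
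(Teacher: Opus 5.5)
There is a genuine gap, and it is exactly the step you flag as the main obstacle. Your proposed remedies cannot close it. Your reduction requires $K\subseteq U$ to contain a translate of \emph{every} $D=W\cap U$, including those with $\dim D\ge 2$. Since $n-l\ge 2$, every $2$-dimensional subspace $D\le U$ arises this way: take $W=D\oplus C'$ with $C'$ an $(n-l-2)$-dimensional subspace of a complement of $U$. So $K$ would have to be a $2$-plane Kakeya set in $U\cong\mathbb F_q^{l+1}$. Theorem~\ref{thm:alpha-almost-kakeya} with $\alpha=1$, $m=2$ (equivalently the Ellenberg--Oberlin--Tao bound) then gives $|U\setminus K|=O(q^{l})$, which destroys the main term $(1-2^{-l})q^{l+1}$. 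So adjoining ``horizontal low-dimensional pieces'' at cost $O(q^l)$ cannot work. Choosing $U$ generically does not help either, since the argument above applies to every $U$.

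The missing observation is that these directions need nothing from $K$. If $\dim(W\cap U)\ge 2$, then $\dim(W+U)=n+1-\dim(W\cap U)\le n-1$. So for $x\notin W+U$ the coset $x+W$ is disjoint from $U\supseteq E$, and $\Pi_V(E)$ is not full. You noticed that traces $(x+W)\cap U$ can be empty but did not use it; an empty trace is all you need. Hence only the $W$ with $W\cap U$ a line matter, and $K$ need only be an ordinary line Kakeya set in $U$. Your family $L_b$ together with $\{t=0\}$ supplies this with $|K|=2^{-l}q^{l+1}+O(q^l)$. For even $q$ the same lines work: for $t\neq 0$, $b^2+tb=y_i$ is solvable for exactly half of the $y_i$, and for $t=0$ it is always solvable.

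With this one-line fix your proof is complete, and it is not really a different route. The set $(\mathbb F_q^n\setminus U)\cup K$ is then an $(n-l)$-plane Kakeya set of precisely the size the paper quotes from \cite[Remark~4.19]{MR2604979}. The paper's proof simply takes the complement of that set, so you have reconstructed the example the paper imports.
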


The sharpness phenomenon is closely related to the finite-field \(m\)-plane Kakeya problem, where \(m=n-l\). Indeed, for a fixed \(V\in G(n,l)(\mathbb F_q)\), the projection \(\Pi_V(E)\) is not full if and only if there exists a coset of \(V^\perp\) that is disjoint from \(E\). Equivalently, if we write
\[
K:=\mathbb F_q^n\setminus E,
\]
then \(K\) contains an affine \(m\)-plane whose direction is \(V^\perp\).

Thus, a set whose \(l\)-dimensional projections are never full is precisely the complement of a set containing an affine \(m\)-plane in every \(m\)-dimensional direction. In other words, such examples arise naturally as complements of \(m\)-plane Kakeya sets. Applying the Ellenberg--Oberlin--Tao construction \cite{MR2604979} yields \(m\)-plane Kakeya sets whose complements have cardinality
\[
(1-2^{-l})q^{\,l+1}+O(q^l),
\]
which immediately gives Proposition~\ref{thm:projection-sharpness}.

The proof of Theorem~\ref{thm:main-projection} is also guided by this Kakeya viewpoint. If
\[
\Pi_V(E)\neq \mathbb F_q^n/V^\perp,
\]
then some coset of \(V^\perp\) is disjoint from \(E\), and therefore lies entirely inside
\[
K=\mathbb F_q^n\setminus E.
\]
Consequently, if the projection fails to be full for an \(\alpha\)-proportion of the directions \(V\in G(n,l)(\mathbb F_q)\), then the complement \(K\) contains a translate of an \((n-l)\)-dimensional affine plane in an \(\alpha\)-proportion of all \((n-l)\)-dimensional directions.

This observation reduces the projection problem to a quantitative Kakeya-type question. It naturally leads to the following stability version of the finite-field \(m\)-plane Kakeya problem, which will be the main ingredient in the proof of Theorem~\ref{thm:main-projection}.

\begin{defn}
\label{def:alpha-almost-m-plane-kakeya}~
Let \(1\le m<n\), and let \(0<\alpha\le 1\). For a set
\(K\subseteq \mathbb F_q^n\), define
\[
    S_K
    :=
    \left\{
        W\in G(n,m)(\mathbb F_q):
        \text{there exists }a\in \mathbb F_q^n
        \text{ such that }a+W\subseteq K
    \right\}.
\]
We say that \(K\) is an \(\alpha\)-almost \(m\)-plane Kakeya set if
\[
    |S_K|\ge \alpha |G(n,m)(\mathbb F_q)|.
\]
\end{defn}

Our next theorem bounds the size of an \(\alpha\)-almost \(m\)-plane Kakeya set. The proof follows the multiplicity-polynomial method of Dvir \cite{MR2525780}, Dvir--Kopparty--Saraf--Sudan \cite{MR3143848} and Ellenberg--Oberlin--Tao \cite{MR2604979}.  The novelty is that one only assumes the Kakeya condition
in a positive fraction of all \(m\)-dimensional directions. The additional ingredient is a
stability version of their projective vanishing lemma: a low-degree homogeneous
polynomial cannot vanish identically on too large a fraction of
\(m\)-dimensional directions. See also \cite{MR3004132,MR3272924,MR0575692} for more material on polynomial methods.

\begin{thm}
\label{thm:alpha-almost-kakeya}~
Let \(1\le m<n\), let \(0<\alpha\le 1\), and let
\(K\subseteq \mathbb F_q^n\).  Suppose that \(K\) is an
\(\alpha\)-almost $m$-plane Kakeya set.  Then
\[
    |K|
    \ge
    q^n-C_{n,m}\alpha^{-1}q^{n-m+1},
\]
where \(C_{n,m}\) depends only on \(n\) and \(m\).
\end{thm}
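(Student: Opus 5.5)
The plan is to prove the codimension-one case $m=n-1$ by a second-moment (Chebyshev) argument, and then obtain general $m$ by averaging that case over random affine $(m+1)$-flats. This does not use the multiplicity-polynomial route. Write $E:=\mathbb F_q^n\setminus K$ and $r:=n-m$. The statement is trivial when $C_{n,m}\alpha^{-1}q^{n-m+1}\ge q^n$, so in particular when $m=1$. Hence we want: if $|E|\ge C\alpha^{-1}q^{r+1}$, then fewer than $\alpha|G(n,m)(\mathbb F_q)|$ directions $W$ admit a coset $a+W$ with $(a+W)\cap E=\emptyset$.

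\emph{Step 1 (hyperplanes).} Let $F\subseteq\mathbb F_q^{d}$ and, for a hyperplane direction $W$, set $N_W(x):=|F\cap(x+W)|$. The mean of $N_W$ over the $q$ cosets is $\mu=|F|/q$. Averaging over $W$, the sum $\sum_{\text{cosets}}N_W^2$ equals $|F|$ plus the number of ordered pairs $x\neq y$ in $F$ with $x-y\in W$. Each nonzero vector lies in a proportion $(q^{d-1}-1)/(q^d-1)\le q^{-1}$ of hyperplanes. Hence the average over $W$ of $\sum_{\text{cosets}}(N_W-\mu)^2$ is at most $|F|$. An empty coset contributes $\mu^2$ to this sum, so by Markov's inequality the proportion of hyperplane directions with an empty coset is at most $|F|/\mu^2=q^2/|F|$. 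This gives the case $m=n-1$ with $C=1$. The same computation for general $m$ only gives $q^{2r}/|E|$, which is too weak; this is why Step 2 is needed.

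\emph{Step 2 (slicing by $(m+1)$-flats).} For each $W\in S_K$ fix one coset $a_W+W\subseteq K$. Choose a uniformly random affine $(m+1)$-flat $U$. Inside $U$, each $a_W+W$ with $a_W+W\subseteq U$ is an affine hyperplane disjoint from $E_U:=E\cap U$. By homogeneity of the affine Grassmannian, a fixed $m$-flat lies in $U$ with probability $p:=\#\{(m+1)\text{-flats}\supseteq\text{fixed }m\text{-flat}\}/\#\{(m+1)\text{-flats}\}$. Double counting pairs $(W,U)$ with $a_W+W\subseteq U$ gives
\[
\alpha\,|G(n,m)|\,p\le\mathbb E_U\,\#\{\text{hyperplane directions of }U\text{ with a coset missing }E_U\}.
\]
A short count of incidences should show that $p\,|G(n,m)|$ is comparable, up to constants depending on $n,m$, to the number $|G(m+1,m)|$ of hyperplane directions in $U$. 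So $\alpha\lesssim\mathbb E_U f_U$, where $f_U$ is the proportion of such directions in $U$. Step 1 gives $f_U\le\min(1,q^2/|E_U|)$.

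\emph{Step 3 (concentration of $|E_U|$).} The mean is $\mu_U:=\mathbb E|E_U|=|E|q^{m+1-n}$. Because the incidence structure of points and $(m+1)$-flats is a $2$-design, the variance of $|E_U|$ is at most $\mu_U$, by the same pair-counting as in Step 1. Chebyshev's inequality then gives $\mathbb P(|E_U|<\mu_U/2)\le4/\mu_U$. Therefore
\[
\alpha\lesssim \mathbb E_U f_U\le\frac{2q^2}{\mu_U}+\frac{4}{\mu_U}\lesssim\frac{q^{n-m+1}}{|E|},
\]
which is the claimed bound $|E|\le C_{n,m}\alpha^{-1}q^{n-m+1}$.

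\emph{Main obstacle.} I expect the main difficulty to be the bookkeeping in Step 2: checking that the normalisation $p\,|G(n,m)|\asymp|G(m+1,m)|$ holds with constants depending only on $n,m$. The choice of a single coset per direction should cause no loss. If this averaging turns out lossy, the fallback is the route the paper announces: Dvir--Kopparty--Saraf--Sudan / Ellenberg--Oberlin--Tao multiplicity polynomials, combined with a Schwartz--Zippel-type statement on $G(n,m)(\mathbb F_q)$ bounding the proportion of $m$-planes on which a nonzero homogeneous polynomial of degree $<q$ vanishes identically.
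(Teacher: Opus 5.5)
\textbf{There is a genuine gap, and it is fatal.} Your Step 1 is correct, so it settles $m=n-1$ (and $m=1$ is trivial). But the normalisation in Step 2, $p\,|G(n,m)|\asymp|G(m+1,m)|$, is false. An affine $(m+1)$-flat contains $q\,N_{m+1}$ affine $m$-flats, while $\mathbb F_q^n$ contains $q^{n-m}|G(n,m)(\mathbb F_q)|$ of them. Hence
\[
p=\frac{q\,N_{m+1}}{q^{n-m}\,|G(n,m)(\mathbb F_q)|},\qquad\text{i.e.}\qquad p\,|G(n,m)(\mathbb F_q)|=q^{-(n-m-1)}\,N_{m+1}=q^{-(n-m-1)}\,|G(m+1,m)(\mathbb F_q)|.
\]
For example, with $n=4$, $m=2$ one gets $p\,|G(4,2)|\approx q$ while $|G(3,2)|\approx q^2$.

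The missing factor is exactly the ``single coset per direction'' you expected to be harmless. If $W\subseteq\vec U$, then $U$ is a union of only $q$ of the $q^{n-m}$ cosets of $W$, so it contains your chosen coset $a_W+W$ with probability $q^{1-(n-m)}$. With the correct normalisation, Step 2 gives only $\mathbb E_U f_U\ge \alpha\,q^{1-(n-m)}$. Steps 1 and 3 then give $\alpha\,q^{1-(n-m)}\lesssim q^2/\mu_U=q^{n-m+1}/|E|$, that is, $\alpha\lesssim q^{2(n-m)}/|E|$. This is precisely the direct second-moment bound you already identified as too weak. So the slicing gains nothing, and the argument fails for $2\le m\le n-2$, which is exactly the range needed for Theorem~\ref{thm:main-projection}.

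What is missing is a genuinely non-$L^2$ input. The paper uses the multiplicity-polynomial method:
\begin{enumerate}
\item If $|K|<\binom{Mq+n-1}{n}/\binom{M+n-1}{n}$ with $M\simeq\alpha q^{m-1}$, there is a nonzero $Q$ of degree at most $Mq-1$ vanishing to order $M$ on $K$.
\item For each $V\in S_K$, this forces $Q$ to vanish identically on $a_V+V$, so the top homogeneous part $Q_d$ vanishes identically on $V$.
\item Lemma~\ref{lem:stability-projective-vanishing} bounds the proportion of such $V$. It is proved by the recursion $\alpha_s(Q)\le\alpha_{s+1}(Q)+D/N_{s+1}$, using that a nonzero form of degree $D$ on $W$ is divisible by at most $D$ pairwise non-proportional linear forms. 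It gives $\alpha\le\alpha_m(Q_d)\lesssim d\,q^{-m}$.
\item This contradicts $d\le Mq-1<c\,\alpha q^m$.
\end{enumerate}

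Your fallback, as stated, would not suffice either. A vanishing lemma restricted to degree $<q$ is too weak, because the relevant degrees are of order $\alpha q^m$. Without multiplicities, one only obtains $|K|\gtrsim q^n$, not $q^n-O(\alpha^{-1}q^{n-m+1})$.
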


When \(\alpha=1\), Theorem~\ref{thm:alpha-almost-kakeya} recovers the same error
order as the Ellenberg--Oberlin--Tao \(m\)-plane Kakeya lower bound, though not
the sharp constant.

\subsection{Continuous setting}

We denote by \(G(n,k)\) the set of \(k\)-dimensional linear subspaces of
\(\mathbb R^n\), and by \(\gamma_{n,k}\) its unique orthogonally invariant Borel
probability measure. A Borel set \(B\subset\mathbb R^n\) is called an
\emph{\((n,k)\)-Besicovitch set} if it has Lebesgue measure zero and, for every
\(V\in G(n,k)\), there exists \(a\in\mathbb R^n\) such that
\[
    B^n_1(a)\cap (V+a)\subset B,
\]
where $B^n_1(a):=\{x \in \RR^n: \|x-a\| \leq 1\}$.
Peres and Schlag observed in \cite{MR1749437} that their
nonempty-interior projection theorem is closely related to the nonexistence problem
for \((n,k)\)-Besicovitch sets. Their Corollary~6.2 recovers Falconer's
nonexistence theorem \cite{MR0553579} in the range \(k>n/2\). They also point out that Bourgain's
theorem gives a stronger nonexistence result, suggesting that the Sobolev-embedding
threshold in their projection theorem should not be sharp for \(k>1\). Our Euclidean
argument follows this suggestion: we reformulate the Peres--Schlag proof using
Littlewood--Paley pieces and then apply Bourgain's Kakeya maximal mechanism back
into the nonempty-interior problem.

For any \(f\in L^1_{\mathrm{loc}}(\mathbb R^d)\), let \(f^*_\delta\) be the Kakeya
maximal function with width \(\delta\). Let \(K_d(p,\delta)\) be the sharp constant
for which
\begin{align}\label{KakeyaConj}
    \|f^*_\delta\|_{L^p(G(d,1))}
    \leq
    K_d(p,\delta)\|f\|_{L^p(\mathbb R^d)}
\end{align}
holds, where \(p\geq 1\) and \(0<\delta<1\). For fixed \(d\) and \(p\), let \(\boxdot=\boxdot(d,p)>0\) be any exponent such that
\begin{align}\label{blackboxxx}
     K_d(p,\delta)\lesssim \delta^{-\boxdot},
\end{align}
where the implicit constant depends on $p,d$.
Kakeya maximal conjecture asserts that $\boxdot>0$ can be chosen arbitrarily small when $p=d$.

Bourgain's method relates estimates of the form \eqref{blackboxxx} to
frequency-localized maximal plane transforms. Combining this with the
Littlewood--Paley reformulation described above gives the following Euclidean
projection theorem.

\begin{thm}\label{nonemptyimprove}
Let \(1\leq k\leq n-1\), and assume that \eqref{blackboxxx} holds in dimension \(d=k+1\) with index \(p\)
and exponent \(\boxdot\) in \eqref{blackboxxx}.
Let \(E\subset\mathbb R^n\) be a Borel set with
\[
    \dim_{\mathrm H}E>
    \frac{\boxdot p+n(p-2)+(k+1)}{p-1}.
\]
Then for \(\gamma_{n,k}\)-a.e. \(V\in G(n,k)\), the projection
\(\Pi_V(E)\) has nonempty interior in \(V\).
\end{thm}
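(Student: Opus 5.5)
Write $s=\dim_{\mathrm H}E$ and choose $\sigma<s$ with $\sigma>\frac{\boxdot p+n(p-2)+(k+1)}{p-1}$. By Frostman's lemma there is a compactly supported probability measure $\mu$ on $E$ with finite Riesz energy $I_\sigma(\mu)=\int|\hat\mu(\xi)|^2|\xi|^{\sigma-n}\,d\xi<\infty$. Following Peres--Schlag, it suffices to show that for $\gamma_{n,k}$-a.e.\ $V$ the projected measure $\mu_V=(\Pi_V)_\#\mu$ has a continuous density on $V$. A continuous density that is not identically zero is positive on an open set, and that open set lies in $\Pi_V(\operatorname{supp}\mu)\subset\Pi_V(E)$ (after passing to a compact subset of $E$). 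Continuity will follow from absolute convergence of a Littlewood--Paley decomposition $\mu_V=\sum_j \mu_V*\psi_j$ in $L^\infty(V)$, with $\psi_j$ localized to frequencies $|\eta|\sim 2^j$ in $V$. So the aim is the bound
\[
\int_{G(n,k)}\sup_{x\in V}|\mu_V*\psi_j(x)|\,d\gamma_{n,k}(V)\lesssim 2^{-\epsilon j}
\]
for some $\epsilon>0$. This is summable in $j$, and Fubini then gives a.e.\ convergence.

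\textbf{Step 1 (reformulation).} Since $\widehat{\mu_V}(\eta)=\hat\mu(\eta)$ for $\eta\in V$, the piece $\mu_V*\psi_j(x)$ equals $\int_{x+V^\perp}(\mu*\Psi_j)$, where $\Psi_j$ is an $n$-dimensional Littlewood--Paley piece. Write $f_j=\mu*\Psi_j$. Then $\sup_x|\mu_V*\psi_j(x)|$ is controlled by a frequency-localized maximal $(n-k)$-plane transform of $f_j$, taken in direction $V^\perp$. Equivalently, one can dualize and view it as an integral of $\hat f_j$ over the $k$-plane $V$. The Peres--Schlag argument is the $L^2$ case of this: Cauchy--Schwarz gives
\[
\int\sup_x|\mu_V*\psi_j|\,d\gamma \lesssim 2^{j(2k-\sigma)/2}\, I_\sigma(\mu)^{1/2},
\]
which recovers the threshold $\sigma>2k$. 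I will instead bound $\int_V|\hat\mu(\eta)\hat\psi_j(\eta)|\,d\eta$, averaged over $V$, by an $L^p$ estimate that exploits Kakeya.

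\textbf{Step 2 (Bourgain's mechanism).} Fix a $(k+1)$-dimensional subspace $U$, fibre $G(n,k)$ by the $k$-planes inside such $U$, and use a slicing/averaging argument over $U\in G(n,k+1)$. Inside $U\cong\mathbb R^{k+1}$, Bourgain's argument converts the Kakeya maximal bound \eqref{blackboxxx} in dimension $d=k+1$ into an $L^p$ bound for the maximal $k$-plane transform of functions frequency-localized at scale $2^j$. A $k$-plane in $\mathbb R^{k+1}$ is dual to a line direction, and $k$-planes are unions of $\delta=2^{-j}$ tubes. The resulting bound has the form $\|\sup\|_{L^p(G)}\lesssim 2^{j\boxdot}\,2^{j\beta}\|f_j\|_{L^p}$ with an explicit scaling exponent $\beta$. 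I then interpolate this $L^p$ bound with the trivial $L^1\to L^\infty$ estimate and the $L^2$ energy estimate, using
\[
\|f_j\|_{L^2}^2\lesssim 2^{j(n-\sigma)}I_\sigma(\mu), \qquad \|f_j\|_{L^\infty}\lesssim 2^{jn}.
\]
By Hölder, $\|f_j\|_p^p\le\|f_j\|_\infty^{p-2}\|f_j\|_2^2\lesssim 2^{j(n(p-2)+n-\sigma)}$. Combining this with the Kakeya constant $2^{j\boxdot p}$ and the $(k+1)$ scaling loss, then taking $p$-th roots, gives a decay exponent proportional to $\boxdot p+n(p-2)+(k+1)-\sigma(p-1)$. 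This is negative exactly under the hypothesis.

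\textbf{Main obstacle.} The delicate step is Step 2. It requires making Bourgain's reduction precise for frequency-localized maximal $k$-plane transforms on $\mathbb R^{k+1}$, with a loss of only $\delta^{-\boxdot}$ times the correct power of $\delta$. It also requires transferring this to $G(n,k)$ in $\mathbb R^n$ via a fibration and integral-geometric averaging that does not lose powers of $2^j$ beyond the $(k+1)$ term. A second point needing care is matching the Hölder exponents so that the final exponent is exactly $\frac{\boxdot p+n(p-2)+(k+1)}{p-1}$. I would first verify the bookkeeping in the case $k=n-1$, where no fibration is needed, and then handle general $k$ by averaging over $U\in G(n,k+1)$ using the invariance of $\gamma_{n,k}$.
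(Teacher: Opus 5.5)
\textbf{Gap: the $L^\infty$ bound in Step~2 is too weak to give the stated threshold.} Your architecture matches the paper's. You use Littlewood--Paley pieces, treat the fibre integrals $\int_{x+V^\perp}\mu_j$ as a frequency-localized maximal $(n-k)$-plane transform, and apply Bourgain's slicing. Your fibration over $U\in G(n,k+1)$ with $U\supset V$ is the orthogonal complement of the paper's parametrization $V^\perp=W+\mathbb R e$, where $W=U^\perp\in G(n,n-k-1)$ and $e\in U\cap S^{n-1}$. However, the exponent in Step~2 does not follow from your inputs, and what is missing is an ingredient, not arithmetic. In your count, $\sigma$ enters only through $\|f_j\|_2^2\lesssim 2^{j(n-\sigma)}I_\sigma(\mu)$, so it can only appear with coefficient $1$, never $p-1$. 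Concretely, with $\|f_j\|_\infty\lesssim 2^{jn}$, H\"older gives $2^{j(n(p-2)+n-\sigma)}$. Multiplying by $2^{j\boxdot p}$ and the plane-transform gain $2^{-j(n-k-1)}$ gives the exponent $\boxdot p+n(p-2)+(k+1)-\sigma$. That is the threshold $\sigma>\boxdot p+n(p-2)+(k+1)$, which is strictly worse than the claimed one for every $p>2$. To get $-\sigma(p-1)$ you need the $L^\infty$ bound $2^{j(n-\sigma)}$ in place of $2^{jn}$. This requires the Frostman ball condition $\mu(B(x,r))\lesssim r^{\sigma}$, which finite energy $I_\sigma(\mu)<\infty$ (all you assumed) does not supply. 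The paper takes $\mu$ to be $s$-Frostman with $\Theta<s<\dim_{\mathrm H}E$. It then uses the energy at a smaller exponent $t$ with $\boxdot p+(n-s)(p-2)+(k+1)<t<s$; this interval is nonempty exactly when $s>\Theta$.

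\textbf{Where the bound is applied, and two further corrections.} The paper applies H\"older slice-wise, after the Kakeya inequality on $U\cong\mathbb R^{k+1}$. The function there is the projected piece $g=(\mu_j)_{U^\perp}$, given by integrals of $\mu_j$ over translates of $U^\perp$. It needs $\|g\|_{L^\infty(U)}\lesssim 2^{j(n-s)}$ uniformly in $U$. This follows from the Frostman condition: the $2^{-j}$-neighbourhood of a translate of $U^\perp$, intersected with the support, is covered by $\sim 2^{j(n-k-1)}$ balls of radius $2^{-j}$, and this is weighed against the projected kernel height $2^{j(k+1)}$. Then Mattila's identity (24.5) with $m=n-k-1$ turns the averaged $\|g\|_2^2$ into $2^{-j(n-k-1)}\|\mu_j\|_2^2$. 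Your version applies H\"older on $\mathbb R^n$ instead, so it would also need an $L^p$ bound $\int\|(f_j)_{U^\perp}\|_p^p\,d\gamma\lesssim 2^{-j(n-k-1)}\|f_j\|_p^p$, which you do not justify. Two further corrections:
\begin{itemize}
\item Inside $U$, the operator dominated by the Kakeya maximal function is the line (X-ray) maximal operator in the direction $e\perp V$, via $\sup_x\bigl|\int_{x+V^\perp}\mu_j\bigr|\lesssim g^*_{2^{-j}}(e)$ (Mattila, Lemma~24.5). It is not a maximal $k$-plane transform.
\item Your plan in Step~1 to bound $\int_V|\hat\mu\hat\psi_j|$ using Kakeya cannot work. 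After taking absolute values, its $\gamma_{n,k}$-average equals $c\int_{|\xi|\sim 2^j}|\hat\mu(\xi)||\xi|^{k-n}\,d\xi$. This carries no Kakeya information and, via Cauchy--Schwarz, only gives the Peres--Schlag $2k$ threshold. The Kakeya gain must be taken on the physical side, as your Step~2 does.
\end{itemize}
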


Compared with the Peres--Schlag threshold, Theorem~\ref{nonemptyimprove} gives an
improvement only in certain parameter ranges. Even under the full Kakeya maximal
conjecture, it does not improve Theorem~\ref{thm:peres-schlag-euclidean} for all
\((n,k)\). Nevertheless, it gives new Euclidean nonempty-interior results, as illustrated
by Corollary~\ref{improvedcor} below.

We record the current best known Kakeya maximal estimates in several low-dimensional cases \cite{BCCLXZ,MR4521046,MR3868003,MR4201413,MR1363209} and apply them via Theorem \ref{nonemptyimprove}. The sharp bound for $d=2$ was obtained in \cite{MR0447949}, while the case $d\geq7$ was treated in \cite{MR4521046}. We remark that the maximal Kakeya inequalities in \cite{BCCLXZ,MR4521046,MR3868003,MR4201413} are formulated in the dual form of (\ref{KakeyaConj}). The equivalence of (\ref{KakeyaConj}) and its dual formulation is discussed in \cite[Section~22.1]{MR3617376}.

\renewcommand{\arraystretch}{1.5}
 \begin{figure}[H]
\begin{tabular}{ ||c|c|c|| } 
 \hline
  $d=$ & $p\leq$&  \\ 
   \hline 
  3 &  ${(5-3\varepsilon)}/{(2-3\varepsilon)}$ &   Katz--Zahl~\cite{MR3868003,MR4201413}   \\
 4 &    $(159+\sqrt{145})/56$ &     Borges--Chan--Chen--Liu--Xi--Zhan~\cite{BCCLXZ} \\  
 5 &  $18/5$ & Hickman--Rogers-Zhang~\cite{MR4521046}      \\ 
 6 &   $4$ &   Wolff~\cite{MR1363209}  \\ 
 7 &   $34/7$ &   Hickman--Rogers--Zhang~\cite{MR4521046}  \\ 
  \hline
\end{tabular}
    \caption{The state-of-the-art for the Kakeya maximal estimate in the form of (\ref{KakeyaConj}) in low dimensions with $\boxdot=(d-1-d/p')+O(\varepsilon)$ for arbitrary small $\varepsilon>0$, where $1/p+1/p'=1$.}
    \label{maximal}
    \end{figure}
    
A model case is \((n,k)=(4,2)\), where the Peres--Schlag threshold \(\dim_{\mathrm H}E>4\) is vacuous. Using the current best three-dimensional Kakeya maximal estimate, Theorem~\ref{nonemptyimprove}
with \(p=(5-3\epsilon)/(2-3\epsilon)\) and $\boxdot=1/5+O(\epsilon)$, for any arbitrarily small $\epsilon>0$, gives the following result.

\begin{cor}\label{improvedcor}
       Let \(E\subset\mathbb R^4\) be a Borel set with $\dim_{\mathrm H} E>{11}/{3}.$
    Then for \(\gamma_{4,2}\)-a.e.
\(V\in G(4,2)\), the orthogonal projection of $E$ has
nonempty interior in \(V\).
\end{cor}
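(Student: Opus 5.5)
The corollary should follow directly from Theorem~\ref{nonemptyimprove} with $(n,k)=(4,2)$, so that $d=k+1=3$. The whole proof is a substitution into the threshold
\[
\frac{\boxdot p+n(p-2)+(k+1)}{p-1},
\]
followed by a limit $\epsilon\to0$. The plan has three steps.

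First, I take the Katz--Zahl estimate from the table in dimension $d=3$. It holds for
\[
p=\frac{5-3\epsilon}{2-3\epsilon}
\]
with $\boxdot=(d-1-d/p')+O(\epsilon)$. At $\epsilon=0$ we have $p=5/2$, so $p'=5/3$ and
\[
\boxdot = 2-\frac{3}{5/3}+O(\epsilon) = 2-\frac{9}{5}+O(\epsilon) = \frac15+O(\epsilon),
\]
which matches the value quoted before the corollary.

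Second, I substitute $n=4$ and $k=2$, so $k+1=3$, and evaluate at $\epsilon=0$. The numerator is
\[
\boxdot p + 4(p-2) + 3 = \frac15\cdot\frac52 + 4\cdot\frac12 + 3 = \frac12+2+3 = \frac{11}{2}.
\]
The denominator is $p-1=3/2$. The threshold is therefore $\frac{11/2}{3/2}=\frac{11}{3}$.

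Third, I handle the $O(\epsilon)$ perturbation, which is the only point needing care. The threshold is a continuous function of $(p,\boxdot)$ near $(5/2,1/5)$, and both $p$ and $\boxdot$ tend to these values as $\epsilon\to0$. So the threshold equals $11/3+O(\epsilon)$. Given $E$ with $\dim_{\mathrm H}E>11/3$, I choose $\epsilon>0$ small enough that this threshold is still below $\dim_{\mathrm H}E$. Theorem~\ref{nonemptyimprove} then gives nonempty interior of $\Pi_V(E)$ for $\gamma_{4,2}$-a.e.\ $V$.

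One further point should be noted. The table states the estimates in dual form, and the paper cites \cite{MR3617376} for their equivalence with \eqref{KakeyaConj}. I would take that equivalence as given.
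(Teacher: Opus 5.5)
Your proposal is correct and matches the paper's argument. The paper obtains the corollary exactly this way: it inserts the three-dimensional Kakeya maximal estimate with $p=(5-3\epsilon)/(2-3\epsilon)$ and $\boxdot=1/5+O(\epsilon)$ into Theorem~\ref{nonemptyimprove} with $(n,k)=(4,2)$, your computation of the limiting threshold $11/3$ is right, and your explicit continuity argument for letting $\epsilon\to0$ spells out the step the paper leaves implicit.
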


The finite-field threshold, Theorem \ref{thm:main-projection}, suggests the following Euclidean conjecture.

\begin{conjecture}
\label{conj:l-plus-one-interior-threshold}~
Let \(1< l<n\), and let \(E\subset \mathbb R^n\) be a Borel set.  If $\dim_{\mathrm H}E>l+1$,
then, for almost every \(V\in G(n,l)\),
$\operatorname{int}_{V}\Pi_V(E)\neq\varnothing.$
\end{conjecture}

This conjecture should be regarded as substantially more difficult than its
finite-field analogue.  The proof of Theorem~\ref{thm:main-projection} relies on
a finite-field Kakeya-type theorem, proved by the polynomial method.  In the Euclidean setting, the corresponding Kakeya phenomena remain widely open, and
one should therefore not expect the finite-field argument to transfer directly.

Finally, we show that the exponent \(l+1\) is the smallest possible threshold in the next proposition.

\begin{prop}
\label{thm:l-plus-one-obstruction}~
Let \(1\le l<n\).  There exists a compact set
\(E\subset \mathbb R^n\) with $\dim_{\mathrm H}E=l+1$
such that, for every \(V\in G(n,l)\), $
    \operatorname{int}_{V}\Pi_V(E)=\varnothing $.
\end{prop}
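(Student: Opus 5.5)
Starting from the case \(l=1\) (and lifting by products), I will build the set. As recalled after Theorem~\ref{thm:peres-schlag-euclidean}, the Besicovitch–Mattila construction gives a compact set \(F\subset\mathbb R^2\) with \(\dim_{\mathrm H}F=2\) whose projection onto every line contains no interval; concretely, \(F\) can be taken as a countable union of rational translates of a compact Besicovitch (Kakeya) set of measure zero, made compact by arranging translates and scalings that accumulate at a point. In \(\mathbb R^n\) I would then like \(E\) to have \(\dim_{\mathrm H}E=l+1\) and a projection with empty interior onto every \(l\)-plane \(V\). The Kakeya/complement heuristic from the finite-field discussion guides the choice: \(\Pi_V(E)\) has empty interior exactly when every open set in \(V\) contains a point whose fibre (an affine copy of \(V^\perp\)) misses \(E\). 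So \(E\) should be a "thin" set that nevertheless avoids a dense family of affine \((n-l)\)-planes in every direction \(V^\perp\).

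I will use a product construction. Write \(\mathbb R^n=\mathbb R^{l+1}\times\mathbb R^{n-l-1}\) and set \(E=F_{l+1}\times\{0\}\), where \(F_{l+1}\subset\mathbb R^{l+1}\) is compact with \(\dim_{\mathrm H}=l+1\) and has empty-interior projections onto every hyperplane and every lower-dimensional subspace of \(\mathbb R^{l+1}\). For a general \(V\in G(n,l)\), \(\Pi_V(E)\) is the image of \(F_{l+1}\) under the linear map \(P:=\Pi_V|_{\mathbb R^{l+1}}:\mathbb R^{l+1}\to V\). If \(\operatorname{rank}P<l\), the image lies in a proper subspace of \(V\) and has empty interior. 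If \(\operatorname{rank}P=l\), then \(\ker P\) is a line \(L\subset\mathbb R^{l+1}\), and \(P\) factors as the orthogonal projection onto \(L^\perp\) (a hyperplane of \(\mathbb R^{l+1}\)) followed by a linear isomorphism onto \(V\). Empty interior is preserved by isomorphisms, so everything reduces to showing that \(F_{l+1}\) projects with empty interior onto every hyperplane of \(\mathbb R^{l+1}\). When \(l=n-1\) there is no second factor and \(F_n\) is used directly.

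To build \(F_{l+1}\), I take a compact Besicovitch set \(B\subset\mathbb R^{l+1}\): measure zero, with a unit segment in every direction. Such sets exist in all dimensions, for example \(B_2\times[0,1]^{l-1}\) made to contain all directions, or the standard constructions. Their dimension is \(l+1\), by Davies/Kakeya in the plane and by the product for the rest. Then \(F_{l+1}=\overline{\bigcup_j (r_jB+t_j)}\) with \(r_j\to0\) and \(t_j\to0\), arranged so the closure adds only a point. The interior claim follows Mattila's rational-translate argument. The orthogonal projection of \(F_{l+1}\) onto a hyperplane \(L^\perp\) is a countable union of the sets \(\pi(r_jB+t_j)\) together with a point. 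By Baire, if it had interior, some single piece \(\pi(r_jB)\) would contain an open ball \(U\subset L^\perp\).

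To rule out that last case, I would change strategy and build \(F\) as the complement-style set rather than the Besicovitch set itself. Choose a compact Kakeya set \(K\subset\mathbb R^{l+1}\) of measure zero containing a unit segment in every direction. Let \(E_0=[0,1]^{l+1}\setminus\bigcup_{q\in\mathbb Q^{l+1}}(K+q)\), which has full measure, hence dimension \(l+1\). For any direction \(L\) and any rational \(q\), the segment of \(K+q\) parallel to \(L\) is missed by \(E_0\). The fibres over a dense set of points of \(L^\perp\) therefore locally miss \(E_0\), though they need full-line avoidance inside the cube, which requires segments of length \(\ge\sqrt{l+1}\); I scale \(K\) accordingly. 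Full measure gives a compact subset \(E'\subset E_0\) of positive measure, so \(\dim=l+1\), with projections of empty interior. This is exactly the Euclidean analogue of Proposition~\ref{thm:projection-sharpness}.

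I expect the main obstacle to be the density step: I must check that, for every direction simultaneously, the avoided fibre points are dense in \(L^\perp\cap\) (the projection of the cube). Rational translates of a single full-length segment give a dense set of fibres, and since \(E'\subset E_0\) is compact inside the open-cube complement, the lines are avoided.
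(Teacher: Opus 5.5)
Your final construction is essentially the paper's proof: you remove all rational translates of a null Kakeya-type set inside an $(l+1)$-dimensional subspace, take a compact positive-measure subset of what remains, and split according to whether $\Pi_V|_{\mathbb R^{l+1}}$ is onto. The paper uses a null set containing an entire line in every direction (\cite[Theorem~11.1]{MR3617376}), so it needs no truncation to a cube. With segments, you should make the covering step explicit, since a length $\ge\sqrt{l+1}$ does not control where the segment sits on its line: take segments of length $\ge 2\sqrt{l+1}$ with midpoint $c_L$, and use only those $q$ with $c_L+q\in[0,1]^{l+1}$, so that the cube's chord through $c_L+q$ is covered and the points $\pi_{L^\perp}(c_L+q)$ are dense in $\pi_{L^\perp}([0,1]^{l+1})$. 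The abandoned first attempt with a union of Besicovitch translates should simply be dropped.
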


\subsection*{Notation}

Throughout the paper, the integers \(n,m,l,k\) are fixed whenever they appear, and all finite-field
asymptotic notation is taken in the limit \(q\to\infty\).  We write $ A\lesssim_{n,m}B$
to mean that there exists a constant \(C_{n,m}>0\), depending only on \(n\) and
\(m\), such that \(A\le C_{n,m}B\).  Similarly,
\(A\gtrsim_{n,m}B\) means \(B\lesssim_{n,m}A\).  We write $ A\simeq_{n,m}B$
if both \(A\lesssim_{n,m}B\) and \(B\lesssim_{n,m}A\) hold.  When the dependence
of constants is clear from context, we suppress the subscripts and simply write
\(\lesssim\), \(\gtrsim\), \(\simeq\), and \(O(\cdot)\).  We write \(o_q(1)\) for
a quantity tending to \(0\) as \(q\to\infty\).

\subsection*{Organization}

The paper is organized as follows.  In Section~\ref{sec:almost-m-plane-kakeya},
we prove the stability version of the projective vanishing lemma and deduce
Theorem~\ref{thm:alpha-almost-kakeya}.  In
Section~\ref{sec:finite-field-peres-schlag}, we prove
Theorem~\ref{thm:main-projection} and Proposition~\ref{thm:projection-sharpness}.
In Section~\ref{sec:continuous-construction}, we prove
Theorem~\ref{nonemptyimprove}. Finally, in Section~\ref{section:Euclidean_threshold_example}, we prove Proposition~\ref{thm:l-plus-one-obstruction}


\section{Almost $m$-plane Kakeya sets}
\label{sec:almost-m-plane-kakeya}

In this section, we prove
Theorem~\ref{thm:alpha-almost-kakeya}.  This result is the Kakeya-type input
needed later for the sharp finite-field projection theorem. Throughout this section, \(n\) and \(m\) are fixed integers with $1\le m<n,$ and q is a sufficiently large number. We reserve \(m\) for the dimension of the planes and use \(M\) for the multiplicity parameter.

The proof follows the multiplicity-polynomial method of Dvir--Kopparty--Saraf--Sudan \cite{MR3143848} and
Ellenberg--Oberlin--Tao \cite{MR2604979}.  The only point where the argument differs from the
usual \(m\)-plane Kakeya proof is the final projective step.  In the usual
argument, the top-degree homogeneous part of a polynomial vanishes on every
\(m\)-dimensional direction; here it only vanishes on an \(\alpha\)-fraction of
the directions.  We therefore first prove a stability version of the projective
vanishing lemma.

We begin with the projective ingredient.  Given a nonzero homogeneous polynomial
\(Q\), let \(\alpha_s(Q)\) denote the proportion of \(s\)-dimensional subspaces
on which \(Q\) vanishes identically.  The next lemma shows that if
\(\alpha_m(Q)\) is large, then the degree $D$ of \(Q\) must be large.  This is the
precise substitute for the projective vanishing lemma in the almost Kakeya
setting.

\begin{lem}
\label{lem:stability-projective-vanishing}~
Let $Q\in \mathbb F_q[x_1,\dots,x_n]$
be a nonzero homogeneous polynomial of degree \(D\). For \(1\le s\le n\), define
\[
    \alpha_s(Q)
    :=
    \frac{
    \left|
    \left\{
        V\in G(n,s)(\mathbb F_q): Q|_V\equiv 0
    \right\}
    \right|
    }{
    |G(n,s)(\mathbb F_q)|
    },
\]
in which \(Q|_V\equiv 0\) means that the restriction of \(Q\) to \(V\) is the zero
polynomial on \(V\), not merely the zero function on \(V(\mathbb F_q)\).

Then, for every \(1\le m<n\),
\begin{equation}
    \alpha_m(Q)
    \le
    D\sum_{j=m+1}^{n}\frac1{N_j},\quad\text{where}\,\,N_j=\frac{q^j-1}{q-1}.
    \label{eq:alpha-m-bound}
\end{equation}
Consequently, if $\alpha_m(Q)\ge \alpha$,
then
\begin{equation}
    D
    \ge
    \alpha
    \left(
        \sum_{j=m+1}^{n}\frac1{N_j}
    \right)^{-1}.
    \label{eq:stability-degree-exact}
\end{equation}
In particular, for fixed \(m,n\), if \(\alpha=1-o_q(1)\), then
\begin{equation}
    D
    \ge
    (1-o_q(1))\frac{q^{m+1}-1}{q-1}.
    \label{eq:stability-degree-asymptotic}
\end{equation}
\end{lem}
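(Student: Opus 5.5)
The plan is to prove \eqref{eq:alpha-m-bound} by a downward induction on the dimension \(s\), comparing \(\alpha_s(Q)\) with \(\alpha_{s+1}(Q)\) through a double count of incident pairs. The base case is \(s=n\). Since \(G(n,n)(\mathbb F_q)=\{\mathbb F_q^n\}\) and \(Q\) is a nonzero polynomial, we have \(\alpha_n(Q)=0\). The inductive step will be the recursion
\[
\alpha_s(Q)\le \alpha_{s+1}(Q)+\frac{D}{N_{s+1}},\qquad 1\le s<n.
\]
Telescoping this from \(s=m\) up to \(s=n-1\) gives \(\alpha_m(Q)\le \alpha_n(Q)+D\sum_{j=m+1}^{n}N_j^{-1}=D\sum_{j=m+1}^{n}N_j^{-1}\), which is \eqref{eq:alpha-m-bound}.

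To prove the recursion, consider pairs \((V,W)\) with \(V\in G(n,s)(\mathbb F_q)\), \(W\in G(n,s+1)(\mathbb F_q)\) and \(V\subset W\). By transitivity of \(GL_n(\mathbb F_q)\), every \(V\) lies in the same number of \(W\)'s. Every \(W\) contains exactly \(N_{s+1}\) hyperplanes \(V\), since hyperplanes of an \((s+1)\)-dimensional space correspond to points of its dual projective space. Hence choosing a uniformly random incident pair makes \(V\) uniform in \(G(n,s)\) and \(W\) uniform in \(G(n,s+1)\). So \(\alpha_s(Q)\) equals the probability that \(Q|_V\equiv 0\), and we split this event according to whether \(Q|_W\equiv 0\).
\begin{itemize}
\item The event \(Q|_W\equiv 0\) has probability \(\alpha_{s+1}(Q)\).
\item If \(Q|_W\not\equiv 0\), then \(Q|_W\) is a nonzero homogeneous polynomial of degree \(D\) on \(W\cong\mathbb F_q^{s+1}\). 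We then show that \(Q|_V\equiv 0\) for at most \(D\) hyperplanes \(V\subset W\), so conditionally on \(W\) this happens with probability at most \(D/N_{s+1}\).
\end{itemize}
Summing the two contributions gives the recursion.

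The one point needing care is the claim in the second case, and this is where the hypothesis that \(Q|_V\) vanishes as a polynomial (not merely as a function) is used. Take a hyperplane \(V=\ker\ell\) in \(W\), with \(\ell\) a nonzero linear form. Choose coordinates on \(W\) with \(\ell=x_1\). Polynomial vanishing of \(Q|_W\) on \(\{x_1=0\}\) means that setting \(x_1=0\) gives the zero polynomial, so \(x_1\mid Q|_W\), that is, \(\ell\mid Q|_W\). Distinct hyperplanes give pairwise non-associate linear forms. These are distinct irreducibles in the UFD \(\mathbb F_q[W]\), so their product divides \(Q|_W\). Comparing degrees, there are at most \(D\) of them. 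When \(D=0\) the polynomial \(Q\) is a nonzero constant, no restriction vanishes, and the bound holds trivially.

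The consequences follow directly. \eqref{eq:stability-degree-exact} is a rearrangement of \eqref{eq:alpha-m-bound} under \(\alpha\le\alpha_m(Q)\). For \eqref{eq:stability-degree-asymptotic}, note that \(N_{j+1}\ge qN_j\). Hence
\[
\sum_{j=m+1}^{n}N_j^{-1}\le N_{m+1}^{-1}\sum_{i\ge0}q^{-i}=(1+O(q^{-1}))N_{m+1}^{-1}.
\]
Combining this with \(\alpha=1-o_q(1)\) gives \(D\ge(1-o_q(1))N_{m+1}=(1-o_q(1))(q^{m+1}-1)/(q-1)\). I expect the only real obstacle to be the divisibility argument in the second case. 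The double count itself is routine once the uniformity of the incidence structure is noted.
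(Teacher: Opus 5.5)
Your proposal is correct and follows essentially the same argument as the paper: the recursion $\alpha_s(Q)\le\alpha_{s+1}(Q)+D/N_{s+1}$ comes from a uniformly random flag $V\subset W$, split on whether $Q|_W\equiv 0$, with the bound of at most $D$ vanishing hyperplanes coming from divisibility by pairwise non-proportional linear forms, and the recursion is then telescoped. The differences are minor: you state the base case $\alpha_n(Q)=0$ explicitly, which the paper leaves implicit, and you use the geometric-series bound from $N_{j+1}\ge qN_j$ instead of the paper's termwise asymptotics for $1/N_j$.
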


\begin{proof} [Proof of Lemma~\ref{lem:stability-projective-vanishing}]
We will prove the recursive estimate
\begin{equation}
    \alpha_s(Q)
    \le
    \alpha_{s+1}(Q)+\frac{D}{N_{s+1}}
    \label{eq:alpha-recursion}
\end{equation}
for every \(1\le s<n\). Iterating \eqref{eq:alpha-recursion} from
\(s=m\) to \(s=n-1\) will then give \eqref{eq:alpha-m-bound}.

Fix \(1\le s<n\). Choose a random pair $V\subset W\subset \mathbb F_q^n$
as follows: first choose \(W\in G(n,s+1)(\mathbb F_q)\) uniformly, and then
choose \(V\in G(W,s)(\mathbb F_q)\) uniformly among the hyperplanes of \(W\).
By symmetry, the resulting \(V\) is uniformly distributed in
\(G(n,s)(\mathbb F_q)\), and hence,
\begin{equation}
    \alpha_s(Q)=\mathbb P(Q|_V\equiv 0).
    \label{eq:alpha-as-probability}
\end{equation}
We now condition on \(W\). By the property of the conditional probability and (\ref{eq:alpha-as-probability}), $\alpha_s(Q)$ is equal to
\begin{align*}
    &\frac{1}{|G(n,s+1)(\mathbb F_q)|}\sum_{W\in G(n,s+1)(\mathbb F_q)}\mathbb P(Q|_V\equiv 0\mid W)\notag\\
    &=\frac{1}{|G(n,s+1)(\mathbb F_q)|}\sum_{W\in G(n,s+1)(\mathbb F_q)}\frac{\left|\left\{V\in G(n,s)(\mathbb F_q):V\subset W,\,Q|_V\equiv 0\right\}\right|}{N_{s+1}}\notag\\
     &=\frac{1}{|G(n,s+1)(\mathbb F_q)|}\sum_{W\in G(n,s+1)(\mathbb F_q)}\frac{\left|\left\{V\in G(n,s)(\mathbb F_q):V\subset W,\,Q|_V\equiv 0 \land Q|_W\equiv 0\right\}\right|}{N_{s+1}}\notag\\
      &+\frac{1}{|G(n,s+1)(\mathbb F_q)|}\sum_{W\in G(n,s+1)(\mathbb F_q)}\frac{\left|\left\{V\in G(n,s)(\mathbb F_q):V\subset W,\,Q|_V\equiv 0  \land Q|_W\not\equiv 0\right\}\right|}{N_{s+1}},
\end{align*}
where the second equality follows from the fact that the cardinality of the hyperplanes contained in a given $s+1$-dimensional subspace $W\in G(n,s+1)$ is $N_{s+1}$.

From the definition of $\alpha_{s+1}(Q)$, we have $\alpha_{s+1}(Q)$ is 
\begin{align*}
    &\frac{1}{|G(n,s+1)(\mathbb F_q)|}\sum_{W\in G(n,s+1)(\mathbb F_q)}\frac{\left|\left\{V\in G(n,s)(\mathbb F_q):V\subset W,\,Q|_V\equiv 0 \land Q|_W\equiv 0\right\}\right|}{N_{s+1}}\\
    &=\frac{1}{|G(n,s+1)(\mathbb F_q)|}\sum_{W\in G(n,s+1)(\mathbb F_q)}\frac{\left|\left\{V\in G(n,s)(\mathbb F_q):V\subset W,\, Q|_W\equiv 0\right\}\right|}{N_{s+1}},
\end{align*}
and hence, it remains to estimate 
\begin{align}\label{needed}
    \frac{1}{|G(n,s+1)(\mathbb F_q)|}\sum_{W\in G(n,s+1)(\mathbb F_q)}\frac{\left|\left\{V\in G(n,s)(\mathbb F_q):V\subset W,\,Q|_V\equiv 0  \land Q|_W\not\equiv 0\right\}\right|}{N_{s+1}}.
\end{align}
In this situation, \(Q|_W\) is a nonzero homogeneous
polynomial of degree at most \(D\) on the \((s+1)\)-dimensional vector space
\(W\). We claim that \(Q|_W\) can vanish identically on at most \(D\) hyperplanes of
\(W\). Indeed, let \(V=\ker \ell\) be a hyperplane of \(W\), where
\(\ell\in W^\ast\) is nonzero. The restriction of \(Q|_W\) to \(V\) is zero as a
polynomial precisely when the image of \(Q|_W\) in
\(
    \operatorname{Sym}(W^\ast)/(\ell)
\)
is zero. After choosing coordinates on \(W\) such that
\(
    \ell=x_{s+1}
\)
and
\(
    V=\{x_{s+1}=0\},
\)
we have that the condition \((Q|_W)|_V\equiv 0\) means precisely that all monomials of
\(Q|_W\) with \(x_{s+1}\)-exponent \(0\) have zero coefficient. Hence, every
monomial of \(Q|_W\) is divisible by \(x_{s+1}\). Thus \(x_{s+1}\), equivalently
\(\ell\), divides \(Q|_W\).

Distinct hyperplanes of \(W\) correspond to non-proportional linear forms in
\(W^\ast\). Therefore, if \(Q|_W\) vanished identically on \(t\) distinct
hyperplanes of \(W\), then \(Q|_W\) would be divisible by a product of \(t\)
pairwise non-proportional linear forms. It turns out that
\(
    t\le D.
\)
Therefore, the term (\ref{needed}) is dominated by 
\begin{equation}
    \frac{1}{|G(n,s+1)(\mathbb F_q)|}\sum_{W\in G(n,s+1)(\mathbb F_q)}\frac{D}{N_{s+1}}=\frac{D}{N_{s+1}},
    \label{eq:conditional-bound}
\end{equation}
which leads to, by (\ref{eq:conditional-bound}), we obtain the desired recursive formula \eqref{eq:alpha-recursion} and hence, \eqref{eq:alpha-m-bound}. 
If \(\alpha_m(Q)\ge \alpha\), then \eqref{eq:alpha-m-bound} implies
\[
    D
    \ge
    \alpha
    \left(
        \sum_{j=m+1}^{n}\frac1{N_j}
    \right)^{-1},
\]
which proves \eqref{eq:stability-degree-exact}. Finally, suppose that \(m,n\) are fixed and \(q\to\infty\). Since
\[
    \frac1{N_j}
    =
    q^{-(j-1)}\bigl(1+O_j(q^{-1})\bigr).
\]
Therefore, we obtain
\begin{equation}
    \sum_{j=m+1}^{n}\frac1{N_j}
    =
    \frac1{N_{m+1}}
    \left(1+O_{m,n}(q^{-1})\right);
    \label{eq:sum-Nj-asymptotic}
\end{equation}
thus, if \(\alpha=1-o_q(1)\), then \eqref{eq:stability-degree-exact} and
\eqref{eq:sum-Nj-asymptotic} give
\[
    D
    \ge
    (1-o_q(1))N_{m+1}
    =
    (1-o_q(1))\frac{q^{m+1}-1}{q-1},
\]
which is \eqref{eq:stability-degree-asymptotic} and the proof is complete.
\end{proof}

We now use Lemma~\ref{lem:stability-projective-vanishing} to prove the lower bound for \(\alpha\)-almost
\(m\)-plane Kakeya sets.

\begin{proof}[Proof of Theorem~\ref{thm:alpha-almost-kakeya}]
We first dispose of the trivial range of \(\alpha\). If $
    \alpha\le C_{0,n,m}q^{1-m}$,
for a sufficiently large constant \(C_{0,n,m}\), then, after increasing
\(C_{n,m}\) if necessary, $C_{n,m}\alpha^{-1}q^{n-m+1}\ge q^n.$
Thus, the desired estimate is trivial. Henceforth, we may, without loss of generality, assume that
\begin{equation}
    \alpha>C_{0,n,m}q^{1-m}.
    \label{eq:nontrivial-alpha-range}
\end{equation}
We next choose the multiplicity parameter.  The quantity \(A_{n,m}\) below is
the reciprocal of the sum appearing in
Lemma~\ref{lem:stability-projective-vanishing}; it is the degree scale forced
by the stability lemma.
For $1\leq m<n$, define
\begin{equation}
    A_{n,m}
    :=
    \left(
        \sum_{j=m+1}^{n}\frac1{N_j}
    \right)^{-1}.
    \label{eq:Anm-definition}
\end{equation}
By \eqref{eq:sum-Nj-asymptotic}, we have $A_{n,m}=q^m\left(1+O_{n,m}(q^{-1})\right).$
Choose a constant \(c_{n,m}>0\) sufficiently small so that, for all sufficiently
large \(q\), \begin{align}\label{eq:choice-cnm}
c_{n,m}q^m<\frac12 A_{n,m}.\end{align}
Set the constant $M:=\left\lfloor c_{n,m}\alpha q^{m-1}\right\rfloor.$ 
After choosing \(C_{0,n,m}\) sufficiently large, the assumption
\eqref{eq:nontrivial-alpha-range} ensures that
$M\ge 1.$

Now, we prove the lower bound for \(|K|\) by contradiction. Suppose that
\begin{equation}
    |K|
    <
   \frac{\binom{Mq+n-1}{n}}{\binom{M+n-1}{n}}.
    \label{eq:contradict-small-K}
\end{equation}
Let $\mathcal P_{Mq-1}$ be the set of all polynomials $Q$ in $\mathbb F_q[x_1,\dots,x_n]$ in which the degree of $Q$ is at most $Mq-1$, then the dimension of $\mathcal P_{Mq-1}$ is 
\(
    \binom{Mq+n-1}{n}.
\)
Moreover, remark that vanishing to order at least \(M\) at one point imposes at most
\(
    \binom{M+n-1}{n}
\)
linear conditions. Hence, from \cite[Proposition 10]{MR3143848},  \eqref{eq:contradict-small-K} implies that there exists
a nonzero polynomial
$Q\in \mathcal P_{Mq-1}$
which vanishes to order at least \(M\) at every point of \(K\). We write $Q=Q_0+\cdots+Q_d$, where $Q_i$ is the i-th homogeneous component of $Q$ and $d\geq1$ is the degree of the polynomial.

Next, fix \(V\in S_K\). By definition of \(S_K\), there exists
\(a_V\in \mathbb F_q^n\) such that $a_V+V\subseteq K$ and hence, the 
restriction of \(Q\) to the affine \(m\)-plane \(a_V+V\) is a zero polynomial. It turns out that $Q_d|_V\equiv0$. Indeed, for \(v\in V\), the polynomial in one variable
\(
    t\mapsto Q(a_V+tv)
\)
is identically zero, and its coefficient of
\(t^d\) is \(Q_d(v)\). Therefore, \(Q_d(v)=0\) for every
\(v\in V\), which implies that
\(
 S_K\subset
    \left\{
        V\in G(n,m)(\mathbb F_q): Q_d|_V\equiv 0
    \right\}
    .
\)
As a consequence, from the definition of $\alpha$-almost $m$-plane set, we get
\begin{align*}
    \alpha_m(Q_d)\geq \frac{|S_K|}{|G(n,m)|}\geq\alpha,
\end{align*}
and hence, Lemma \ref{lem:stability-projective-vanishing} reveals that degree of $Q_d$ is at least $\alpha A_{n,m}$. Furthermore, by the choice of $M$, we have
\[
\alpha A_{n,m}\leq\deg Q_d\leq\deg Q\leq
    Mq-1<c_{n,m}\alpha q^m,
\]
which is a contradiction, due to the  choice of \(c_{n,m}\) in (\ref{eq:choice-cnm}). 
Therefore,
\[
|K|
    \geq
    \frac{\binom{Mq+n-1}{n}}{\binom{M+n-1}{n}}.
\]
Since \(M\sim_{n,m}\alpha q^{m-1}\), this implies that
$|K|\geq q^n-C_{n,m}\alpha^{-1}q^{n-m+1}$, as desired.
\end{proof}


\section{A sharp finite-field Peres--Schlag orthogonal projection theorem}
\label{sec:finite-field-peres-schlag}

In this section, we prove Theorem~\ref{thm:main-projection} and Proposition~\ref{thm:projection-sharpness}. The main point is that the failure of fullness for an
\(l\)-dimensional projection can be rephrased as a Kakeya-type condition for the
complement.  As mentioned in the Introduction, if \(\Pi_V(E)\) is not full, then some coset of \(V^\perp\)
is disjoint from \(E\), and hence, lies entirely inside
\(\mathbb F_q^n\setminus E\).  Thus, the set of bad projection directions gives a
family of affine \((n-l)\)-planes contained in the complement of \(E\).  The
$\alpha$-almost \(m\)-plane Kakeya set theorem from Section~\ref{sec:almost-m-plane-kakeya}
then forces the complement to be large, or equivalently forces \(E\) to be small.

\begin{proof}[Proof of Theorem~\ref{thm:main-projection}]
Let
\[
    \mathcal{B}:=\left\{
        V\in G(n,l)(\mathbb F_q):
        \Pi_V(E)\neq \mathbb F_q^n/V^\perp
    \right\}
\]
be the exceptional set of bad \(l\)-dimensional directions. Write
$|\mathcal B|=\alpha |G(n,l)(\mathbb F_q)|$ and we assume that, without loss of generality, $\alpha>0$. We will show that \(\alpha=o_q(1)\).

Set $K:=\mathbb F_q^n\setminus E$ and remark that if \(V\in\mathcal B\), then  there exists at least one coset of
\(V^\perp\) which does not meet \(E\), which implies that there exists $a_V\in\mathbb F^n_q$ such that $(a_V+V^\perp)\cap E=\varnothing$.
Hence,
\begin{equation}
    a_V+V^\perp\subseteq K.
    \label{eq:bad-projection-coset-in-complement}
\end{equation}
The map $V\mapsto V^\perp$
is a bijection from \(G(n,l)(\mathbb F_q)\) to
\(G(n,n-l)(\mathbb F_q)\). Therefore, \eqref{eq:bad-projection-coset-in-complement}
shows that \(K\) is
an \(\alpha\)-almost \((n,n-l)\)-Kakeya set.

We may now apply the almost Kakeya lower bound proved in the previous section.
Applying Theorem~\ref{thm:alpha-almost-kakeya} with \(m=n-l\), we obtain
\begin{equation}
    q^n-|K|
    \le
    C_{n,l}\alpha^{-1}q^{n-(n-l)+1}
    =
    C_{n,l}\alpha^{-1}q^{l+1}.
    \label{eq:projection-complement-bound}
\end{equation}
Since \(K=\mathbb F_q^n\setminus E\), then $q^n-|K|=|E|$ and hence, from the estimate (\ref{eq:projection-complement-bound}),
\begin{equation}
    \alpha
    \le
    C_{n,l}\frac{q^{l+1}}{|E|}.
    \label{eq:projection-alpha-bound}
\end{equation}
Finally, the assumed lower bound on \(|E|\) forces the exceptional proportion to
go to zero.  Combining (\ref{eq:projection-alpha-bound}) with the assumption that \(|E|\gtrsim q^s\) with \(s>l+1\), we get
$\alpha\lesssim_{n,l}q^{l+1-s}=o_q(1)$.
As a result, we have
\[
    |\mathcal B|
    =
    o_q(1)|G(n,l)(\mathbb F_q)|.
\]
This proves Theorem~\ref{thm:main-projection}.
\end{proof}

We now turn to the sharpness construction. The same correspondence between
projection failure and affine planes in the complement is now used in the
opposite direction.

\begin{proof}[Proof of Proposition~\ref{thm:projection-sharpness}]
Set
\(
    k:=n-l.
\)
Since \(2\le l<n-1\), we have
\(
    1<k<n.
\)
By the construction described in Remark~4.19 of Ellenberg--Oberlin--Tao \cite{MR2604979}, there
exists a \(k\)-plane Kakeya set
\(
    K\subseteq \mathbb F_q^n
\)
such that \(K\) contains a translate of every \(k\)-dimensional linear subspace
of \(\mathbb F_q^n\), and
\begin{equation}
    |K|
    =
    q^n\left(
        1-(1-2^{-n+k})q^{1-k}+O(q^{-k})
    \right),
    \label{eq:EOT-k-plane-Kakeya-size}
\end{equation}
where the implicit constant depends only on \(n\) and \(k\).
Define
\(
    E:=\mathbb F_q^n\setminus K.
\)
Then by \eqref{eq:EOT-k-plane-Kakeya-size},
\begin{equation}
    |E|
    =
    q^n-|K|
    =
    (1-2^{-n+k})q^{n+1-k}+O(q^{n-k}).
    \label{eq:E-complement-size-k}
\end{equation}
Since \(k=n-l\), \eqref{eq:E-complement-size-k} becomes
\[
|E|
    =
    (1-2^{-l})q^{l+1}+O(q^l).
    \label{eq:E-complement-size-l}
\]
It remains to show that no \(l\)-dimensional projection of \(E\) is full. Given any
\(
    V\in G(n,l)(\mathbb F_q),
\)
there exists
\(a_V\in \mathbb F_q^n\) such that
$a_V+V^\perp\subseteq K.$
Therefore,
\(
    (a_V+V^\perp)\cap E=\varnothing.
\)
Equivalently, the coset
\(
    a_V+V^\perp\in \mathbb F_q^n/V^\perp
\)
does not belong to \(\Pi_V(E)\). Hence,
\(
    \Pi_V(E)\neq \mathbb F_q^n/V^\perp.
\)
This proves Proposition~\ref{thm:projection-sharpness}.
\end{proof}


\section{An improvement of the Peres--Schlag orthogonal projection theorem in Euclidean space}
\label{sec:continuous-construction}

In this section, we prove Theorem~\ref{nonemptyimprove}. The argument is a Littlewood--Paley reformulation of the Peres--Schlag proof, with Bourgain's Kakeya maximal mechanism inserted at the frequency-localized level. More precisely, each projected frequency piece is controlled by a signed maximal \((n-k)\)-plane transform, which Bourgain's reduction bounds by a Kakeya maximal operator in dimension \(k+1\). The resulting estimates are summed using the Riesz energy of a Frostman measure, which shows that for almost every direction, the projected measure has a continuous density.

We first establish the required maximal-transform estimate.
We use a signed frequency-localized version of the maximal plane transform. For
\(1\le m<n\), \(g\in \mathcal S(\mathbb R^n)\), and \(V\in G(n,m)\), set
\[
    g_V(x):=\int_{x+V}g(y)\,d\mathcal H^m(y),
    \qquad x\in V^\perp.
\]
If \(g\) is frequency supported on \(|\xi|\simeq 2^j\), define
\[
    M_j^m g(V)
    :=
    \sup_{x\in V^\perp}
    \left|
        \int_{x+V}g(y)\,d\mathcal H^m(y)
    \right|
    =
    \|g_V\|_{L^\infty(V^\perp)}.
\]
When \(g=\mu_j\) is a Littlewood--Paley piece at frequency \(2^j\), we write
\(M^m(\mu_j)\) for \(M_j^m(\mu_j)\).

We shall use the following annular \(L^2\)-estimate for signed plane transforms,
which is \cite[(24.5)]{MR3617376}.

\begin{prop}
Let \(R>0\), and let \(g\in \mathcal S(\mathbb R^n)\) have frequency support on
\(|\xi|\simeq R\). Then, for every \(1\le m<n\),
\begin{align}\label{integralformula1}
    \iint_{G(n,m)\times V^\perp}
    |g_V(v)|^2\,d\mathcal H^{n-m}(v)\,d\gamma_{n,m}(V)
    \simeq
    R^{-m}\|g\|_{L^2(\mathbb R^n)}^2.
\end{align}
\end{prop}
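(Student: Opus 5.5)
The natural route is through Fourier analysis and the projection-slice theorem. For $V\in G(n,m)$ and $x\in V^\perp$, the function $g_V(x)=\int_{x+V}g\,d\mathcal H^m$ is the pushforward of $g$ onto $V^\perp$. Its Fourier transform on $V^\perp$ is therefore the restriction of $\widehat g$:
\[
\widehat{g_V}(\eta)=\widehat g(\eta),\qquad \eta\in V^\perp .
\]
By Plancherel on the $(n-m)$-dimensional space $V^\perp$, the inner integral becomes
\[
\int_{V^\perp}|g_V|^2\,d\mathcal H^{n-m}=\int_{V^\perp}|\widehat g(\eta)|^2\,d\mathcal H^{n-m}(\eta),
\]
up to the normalising constant of the Fourier transform.

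Next we average over $V$. As $V$ ranges over $G(n,m)$ with law $\gamma_{n,m}$, the complement $V^\perp$ ranges over $G(n,n-m)$ with its invariant measure $\gamma_{n,n-m}$. So the left-hand side of \eqref{integralformula1} equals
\[
\int_{G(n,n-m)}\int_W|\widehat g|^2\,d\mathcal H^{n-m}\,d\gamma_{n,n-m}(W).
\]
We compute this with the standard integral-geometric identity: for nonnegative $F$ and $1\le d\le n$,
\[
\int_{G(n,d)}\int_W F\,d\mathcal H^d\,d\gamma_{n,d}(W)=c_{n,d}\int_{\mathbb R^n}F(\xi)\,|\xi|^{d-n}\,d\xi .
\]
To verify it, write both sides in polar coordinates. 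The inner integral over $W$ is $\int_0^\infty r^{d-1}\int_{S_W}F(r\theta)\,d\sigma_W(\theta)\,dr$. Averaging the uniform measure on the sphere $S_W$ over $W$ gives a rotation-invariant probability measure on $S^{n-1}$, which must be normalized surface measure. This leaves $\int_0^\infty r^{d-1}\int_{S^{n-1}}F(r\theta)\,d\sigma\,dr$, and that is a constant times $\int F(\xi)|\xi|^{d-n}\,d\xi$.

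Apply the identity with $d=n-m$ and $F=|\widehat g|^2$. The left-hand side of \eqref{integralformula1} becomes
\[
c\int_{\mathbb R^n}|\widehat g(\xi)|^2|\xi|^{-m}\,d\xi .
\]
Since $\widehat g$ is supported where $|\xi|\simeq R$, the weight satisfies $|\xi|^{-m}\simeq R^{-m}$ there. Plancherel on $\mathbb R^n$ then gives $\simeq R^{-m}\|g\|_2^2$, with two-sided constants depending only on $n$, $m$ and the annulus ratio.

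The only point needing care is the integral-geometric averaging identity, specifically that the averaged sphere measures are rotation invariant and hence equal normalized surface measure. This is routine given the uniqueness of invariant measures. Everything else is Plancherel, and the Schwartz hypothesis makes all integrals finite and justifies the slice theorem.
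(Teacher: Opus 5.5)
Your argument is correct. The slice identity $\widehat{g_V}=\widehat g|_{V^\perp}$, Plancherel on $V^\perp$, and the integral-geometric formula $\int_{G(n,m)}\int_{V^\perp}F\,d\mathcal H^{n-m}\,d\gamma_{n,m}(V)=c_{n,m}\int_{\mathbb R^n}F(\xi)|\xi|^{-m}\,d\xi$ give the exact identity with $c\int|\widehat g|^2|\xi|^{-m}\,d\xi$, and the annular frequency support then yields the two-sided bound.

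The paper does not prove this estimate; it cites \cite[(24.5)]{MR3617376}, and this slice/Plancherel/polar-coordinates argument is the standard way it is obtained, so your route is essentially the same. Your one slip, mixing normalized and unnormalized sphere measures, only changes the constant.
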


We next recall the form in which Bourgain's comparison is used. Let
\(V\in G(n,n-k-1)\), put \(S_V:=V^\perp\cap S^{n-1}\), and for \(e\in S_V\) set
\[
    V_e:=V+\mathbb R e\in G(n,n-k).
\]
If \(g\) is frequency localized at scale \(2^j\), then
\[
    \int_{a+V_e}g\,d\mathcal H^{n-k}
    =
    \int_{a+\mathbb R e}g_V(z)\,d\mathcal H^1(z),
    \qquad a\in V_e^\perp.
\]
Here \(g_V\) is a function on \(V^\perp\simeq\mathbb R^{k+1}\), and it is frequency
localized at scale \(\lesssim 2^j\). By \cite[Lemma~24.5 and the proof of
(24.7)]{MR3617376},
\[
    M_j^{n-k}g(V_e)
    \lesssim
    (g_V)^*_{2^{-j}}(e),
\]
where the right-hand side is the usual line Kakeya maximal function of width
\(2^{-j}\) on the \((k+1)\)-dimensional space \(V^\perp\).

\begin{lem}\label{dictionary}
Let \(1\leq k\leq n-1\), and let \(\mu\) be a compactly supported Borel probability
measure on \(\mathbb R^n\). Assume that \(\mu\) is \(\alpha\)-Frostman. Let
\(\mu_j\) denote a Littlewood--Paley piece of \(\mu\) at frequency \(|\xi|\simeq 2^j\).
Assume the \((k+1)\)-dimensional Kakeya maximal inequality holds for
\eqref{blackboxxx} with index \(p\) and some \(\boxdot>0\). Then, for every \(t\)
satisfying
\begin{align}\label{threshold1}
    t>\boxdot p+(n-\alpha)(p-2)+(k+1),
\end{align}
one has
\begin{align}\label{reformulation}
    \left\|
        \sum_{j\geq0}M^{n-k}(\mu_j)
    \right\|_{L^p(G(n,n-k))}
    \lesssim_{\alpha,n,k}
    I_t(\mu)^{1/p}.
\end{align}
In particular, taking \(t=\alpha\), \eqref{reformulation} is available provided
\begin{align}\label{routine}
    \alpha>
    \frac{\boxdot p+n(p-2)+(k+1)}{p-1}.
\end{align}
\end{lem}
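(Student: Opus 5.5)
The plan is to bound each Littlewood--Paley piece separately and then sum over \(j\). For a single \(j\), I interpolate between an \(L^2\) estimate, coming from \eqref{integralformula1} together with Frostman decay, and an \(L^\infty\)-type estimate, coming from Bourgain's reduction plus \eqref{blackboxxx}. Write \(\|\cdot\|_p:=\|\cdot\|_{L^p(G(n,n-k))}\). By the triangle inequality it is enough to prove
\[
    \|M^{n-k}(\mu_j)\|_p\lesssim 2^{-\varepsilon j}\,I_t(\mu)^{1/p}
\]
for some \(\varepsilon>0\). Summing this geometric series over \(j\ge0\) then gives \eqref{reformulation}.

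To estimate one piece, I average over the fibration \(V\mapsto V_e=V+\mathbb R e\), with \(V\in G(n,n-k-1)\) and \(e\in S_V\). Since \(\gamma_{n,n-k}\) disintegrates as the \(\gamma_{n,n-k-1}\)-average of the uniform measures on \(S_V\), we have
\[
    \|M^{n-k}(\mu_j)\|_p^p=\int_{G(n,n-k-1)}\int_{S_V}M^{n-k}_j(\mu_j)(V_e)^p\,de\,d\gamma(V).
\]
Bourgain's comparison stated above bounds the integrand by \(((\mu_j)_V)^*_{2^{-j}}(e)^p\). The Kakeya bound \eqref{KakeyaConj}--\eqref{blackboxxx} in \(V^\perp\simeq\mathbb R^{k+1}\) then bounds the inner integral by \(2^{j\boxdot p}\|(\mu_j)_V\|_{L^p(V^\perp)}^p\).

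Next I interpolate on \(V^\perp\). Since \(\mu\) is \(\alpha\)-Frostman and \(\mu_j=\mu*\psi_j\) with \(\psi_j\) an \(L^1\)-normalized bump at scale \(2^{-j}\), I get \(\|\mu_j\|_\infty\lesssim 2^{j(n-\alpha)}\). Integrating over the \((n-k-1)\)-dimensional fibres (after smoothly localizing to a fixed ball, which the compact support of \(\mu\) allows) gives \(\|(\mu_j)_V\|_\infty\lesssim 2^{j(n-\alpha)}\). Hence
\[
    \|(\mu_j)_V\|_p^p\le \|(\mu_j)_V\|_\infty^{p-2}\,\|(\mu_j)_V\|_2^2 .
\]
Averaging in \(V\) and applying \eqref{integralformula1} with \(m=n-k-1\) and \(R=2^j\) gives \(2^{-j(n-k-1)}\|\mu_j\|_2^2\). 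By Plancherel, \(\|\mu_j\|_2^2\simeq\int_{|\xi|\simeq2^j}|\hat\mu|^2\lesssim 2^{j(n-t)}I_t(\mu)\). Collecting powers of \(2^j\),
\[
    \|M^{n-k}(\mu_j)\|_p^p\lesssim 2^{j[\boxdot p+(n-\alpha)(p-2)+(k+1)-t]}\,I_t(\mu).
\]
The exponent is negative exactly under \eqref{threshold1}, which gives the geometric decay needed for the sum over \(j\).

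For the final claim I take \(t=\alpha\); strictly, \(t<\alpha\) close to \(\alpha\), so that \(I_t(\mu)<\infty\) for an \(\alpha\)-Frostman measure, or I pass to a slightly smaller Frostman exponent. Condition \eqref{threshold1} then reads \(\alpha(p-1)>\boxdot p+n(p-2)+(k+1)\), which is \eqref{routine}; the openness of the strict inequality absorbs the \(\epsilon\)-loss.

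The main obstacle is the Bourgain step. One must check that \((\mu_j)_V\) is genuinely frequency-localized at scale \(\lesssim2^j\) on \(V^\perp\), and that the signed sup over affine planes is dominated pointwise by the width-\(2^{-j}\) Kakeya maximal function; this is where the Schwartz tails of the Littlewood--Paley kernel must be handled. I would take this from \cite[Lemma~24.5]{MR3617376} as cited, applying it to the smoothly localized \(\mu_j\). I would also verify that the disintegration of \(\gamma_{n,n-k}\) through the flag \(V\subset V_e\) holds with bounded density.
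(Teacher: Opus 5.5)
Your proposal is correct and follows the paper's proof essentially step for step: the fibration $V\subset V_e$ of $G(n,n-k)$, Bourgain's comparison combined with the $(k+1)$-dimensional Kakeya bound, the interpolation $\|(\mu_j)_V\|_p^p\le\|(\mu_j)_V\|_\infty^{p-2}\|(\mu_j)_V\|_2^2$ with the Frostman bound $2^{j(n-\alpha)}$ (both arguments tacitly need $p\ge 2$ here), the annular $L^2$ identity with $m=n-k-1$, and summation over $j$. The differences are cosmetic. You bound each $\|\mu_j\|_2^2$ by $2^{j(n-t)}I_t(\mu)$ and sum a geometric series, where the paper keeps the localized energies $I_{t,j}(\mu)$ and applies H\"older in $j$; and you state explicitly that $t=\alpha$ must be approached from below because $I_\alpha(\mu)$ may be infinite, which is how the paper later applies the lemma.
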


\begin{proof} [Proof of Lemma \ref{dictionary}]
Fix \(j\ge0\), and write \(\delta(j)=2^{-j}\). By the parametrization of
\(G(n,n-k)\),
\[
    \|M^{n-k}(\mu_j)\|_{L^p(G(n,n-k))}^p
    \simeq
    \iint_{G(n,n-k-1)\times S_V}
    |M^{n-k}(\mu_j)(V_e)|^p
    \,d\sigma_V(e)\,d\gamma_{n,n-k-1}(V).
\]
The preceding comparison and the \((k+1)\)-dimensional Kakeya maximal inequality
on \(V^\perp\simeq\mathbb R^{k+1}\) imply, for each \(V\in G(n,n-k-1)\),
\begin{align*}
    \int_{S_V}|M^{n-k}(\mu_j)(V_e)|^p\,d\sigma_V(e)
    &\lesssim
    \int_{S_V}|((\mu_j)_V)^*_{\delta(j)}(e)|^p\,d\sigma_V(e)\\
    &\lesssim
    \delta(j)^{-\boxdot p}
    \int_{V^\perp}|(\mu_j)_V(v)|^p\,d\mathcal H^{k+1}(v).
\end{align*}
Consequently, the $L^p$-norm of $M^{n-k}(\mu_j)$ can be dominated by
\begin{align}\label{Lpformaximal}
    \delta(j)^{-\boxdot p}
    \iint_{G(n,n-k-1)\times V^\perp}
    |(\mu_j)_V(v)|^p\,
    d\mathcal H^{k+1}(v)\,d\gamma_{n,n-k-1}(V).
\end{align}

Since \(\mu\) is \(\alpha\)-Frostman, we have
\(
    \|\mu_j\|_\infty\lesssim C_\alpha 2^{j(n-\alpha)}.
\)
Moreover, by the rapid decay of the Littlewood--Paley kernel and a standard
tube-covering estimate for Frostman measures, uniformly in \(V\in G(n,n-k-1)\),
\[
    \|(\mu_j)_V\|_{L^\infty(V^\perp)}
    \lesssim C_\alpha 2^{j(n-\alpha)}.
\]
Hence, \eqref{Lpformaximal} gives
\[
\begin{aligned}
    \|M^{n-k}(\mu_j)\|_{L^p(G(n,n-k))}^p
    &\lesssim
    \delta(j)^{-\boxdot p}
    2^{j(n-\alpha)(p-2)}
    \iint_{G(n,n-k-1)\times V^\perp}
    |(\mu_j)_V(v)|^2\,
    d\mathcal H^{k+1}(v)\,d\gamma_{n,n-k-1}(V).
\end{aligned}
\]
Since \(\mu_j\) is frequency supported on \(|\xi|\simeq2^j\), \eqref{integralformula1}
with \(m=n-k-1\) yields
\begin{align}\label{wrapup}
    \|M^{n-k}(\mu_j)\|_{L^p(G(n,n-k))}^p
    \lesssim
    \delta(j)^{-\boxdot p}
    2^{j(n-\alpha)(p-2)}
    2^{-j(n-k-1)}
    \|\mu_j\|_2^2.
\end{align}
Define
\(
    I_{t,j}(\mu):=2^{j(t-n)}\|\mu_j\|_2^2.
\)
Using \(\delta(j)=2^{-j}\), \eqref{wrapup} becomes
\[
    \|M^{n-k}(\mu_j)\|_{L^p}
    \lesssim
    2^{\frac jp\{\boxdot p+(n-\alpha)(p-2)+(k+1)-t\}}
    I_{t,j}(\mu)^{1/p}.
\]
If \eqref{threshold1} holds, choose \(\eta>0\) such that
\[
    t=\boxdot p+(n-\alpha)(p-2)+(k+1)+\eta.
\]
Then,
\[
    \|M^{n-k}(\mu_j)\|_{L^p}
    \lesssim
    2^{-\eta j/p}I_{t,j}(\mu)^{1/p}.
\]
By H\"older's inequality in \(j\),
\[
    \sum_{j\ge0}2^{-\eta j/p}I_{t,j}(\mu)^{1/p}
    \lesssim_\eta
    \left(\sum_{j\ge0}I_{t,j}(\mu)\right)^{1/p}
    \lesssim I_t(\mu)^{1/p}.
\]
The triangle inequality gives \eqref{reformulation}. Finally, setting \(t=\alpha\)
in \eqref{threshold1} and rearranging gives \eqref{routine}.
\end{proof}

Using this lemma and Frostman's lemma, we obtain the improved Euclidean projection theorem for the nonempty interior problem.

\begin{proof}[Proof of Theorem \ref{nonemptyimprove}]
Set
\[
    \Theta:=\frac{\boxdot p+n(p-2)+(k+1)}{p-1}.
\]
Assume \(\dim_{\mathrm H}E>\Theta\). Choose \(s\) with
\(
    \Theta<s<\dim_{\mathrm H}E.
\)
Then we may choose \(t\) satisfying
\[
    \boxdot p+(n-s)(p-2)+(k+1)<t<s.
\]
By Frostman's lemma, there exists a compactly supported probability measure \(\mu\)
on \(E\) such that \(\mu\) is \(s\)-Frostman. Since \(t<s\), we have
\(
    I_t(\mu)<\infty.
\)
Let \(\{P_j\}_{j\ge1}\), together with a low-frequency projection \(P_{\le0}\), be a
smooth Littlewood--Paley decomposition with radial Fourier multipliers, and set
\(
    \mu_j:=P_j\mu,
\)  and \(
    \mu_{\le0}:=P_{\le0}\mu.
\)
For \(V\in G(n,k)\), define
\[
    F_{V,j}(x):=\int_{x+V^\perp}\mu_j(y)\,d\mathcal H^{n-k}(y),
    \qquad x\in V,
\]
and define \(F_{V,\le0}\) similarly. Then each \(F_{V,j}\) is smooth on \(V\), and
\[
    \|F_{V,j}\|_{L^\infty(V)}
    \le
    M^{n-k}(\mu_j)(V^\perp).
\]
Applying Lemma~\ref{dictionary} with \(\alpha=s\), and using the measure-preserving
bijection \(V\mapsto V^\perp\) from \(G(n,k)\) to \(G(n,n-k)\), we obtain for
\(\gamma_{n,k}\)-a.e. \(V\)
\[
    \sum_{j\ge1}\|F_{V,j}\|_{L^\infty(V)}
    \le
    \sum_{j\ge1}M^{n-k}(\mu_j)(V^\perp)
    <\infty.
\]
Hence the series
\(
    F_{V,\leq 0}+\sum_{j\geq 1}F_{V,j}
\)
converges uniformly on \(V\) to a continuous function, which we denote by \(F_V\).
Since
\[
    P_{\leq 0}\mu+\sum_{j\geq 1}P_j\mu=\mu
\]
in the sense of distributions on \(\mathbb R^n\), testing against functions of the form
\(\varphi\circ \Pi_V\), with \(\varphi\in C_c^\infty(V)\), gives
\(
    (\Pi_V)_\#\mu=F_V\,d\mathcal H^k
\)
as measures on \(V\). Since \((\Pi_V)_\#\mu\) is a nonzero positive measure, the
continuous density \(F_V\) is nonnegative and not identically zero. Hence \(F_V>0\)
on some nonempty open ball in \(V\). As \(\operatorname{spt}\mu\subset E\), we have
\(
    \operatorname{spt}((\Pi_V)_\#\mu)\subset \Pi_V(E),
\)
and therefore
\(
    \operatorname{int}_V\Pi_V(E)\neq\varnothing
\)
for \(\gamma_{n,k}\)-a.e. \(V\in G(n,k)\).
\end{proof}

\section{A Euclidean threshold example} \label{section:Euclidean_threshold_example}
In this section, we prove Proposition~\ref{thm:l-plus-one-obstruction}. This construction is an extension of Mattila's example, as described in Section~6 of Peres--Schlag \cite{MR1749437}.

\begin{proof}[Proof of Proposition \ref{thm:l-plus-one-obstruction}]
Fix $H\in G(n,l+1)$.
We shall construct \(E\) inside \(H\).
By \cite[Theorem 11.1]{MR3617376}, there
exists a Borel set $A\subset H$
of \((l+1)\)-dimensional Lebesgue measure zero such that, for every
one-dimensional linear subspace \(L\subset H\), there exists \(a_L\in H\)
with $a_L+L\subset A.$ In other words, \(A\) contains a whole affine line in every direction in \(H\).

Choose a coordinate system on \(H\cong\mathbb R^{l+1}\), and let $\mathbb Q_H^{l+1}\subset H$
denote the corresponding set of rational points.  Define
\[
    D
    :=
    \bigcup_{q\in \mathbb Q_H^{l+1}}(q+A),
\]
which has
\((l+1)\)-dimensional Lebesgue measure zero in \(H\). Let $E\subset H\setminus D$ be a compact set
with positive \((l+1)\)-dimensional Lebesgue measure, which is available due to the inner compact regular property of the Lebesgue measure. In particular,
\[
    \dim_{\mathrm H}E=l+1.
\]
We now prove that every \(l\)-dimensional orthogonal projection of \(E\)
has an empty interior.  Fix \(V\in G(n,l)\), and consider the restricted
projection
\[
    L_V:=\Pi_V|_H:H\to V.
\]
There are two cases, according to the surjectivity of \(L_V\).  First, suppose that $L_V$ is not surjective.
Then $\Pi_V(E)=L_V(E)\subset L_V(H)$ and \(L_V(H)\) is a proper linear subspace of \(V\).  Therefore, in such a case,
$\operatorname{int}_{V}\Pi_V(E)=\varnothing$.

It remains to consider the case where \(L_V\) is onto. Then \(\ker L_V\) is a
one-dimensional subspace of \(H\). By the defining property of \(A\), there exists
\(a_V\in H\) such that
\(
    a_V+\ker L_V\subset A.
\)
Hence, for every \(q\in Q_H^{l+1}\),
\[
    q+a_V+\ker L_V\subset q+A\subset D.
\]
Since \(E\subset H\setminus D\), the fiber over \(L_V(q+a_V)\) is disjoint from \(E\).
Therefore,
\(
    L_V(q+a_V)\notin L_V(E).
\)
As \(Q_H^{l+1}\) is dense in \(H\) and \(L_V\) is onto, the set
\(
    \{L_V(q+a_V):q\in Q_H^{l+1}\}
\)
is dense in \(V\). Thus \(V\setminus L_V(E)\) is dense in \(V\), and consequently
\(
    \operatorname{int}_V L_V(E)=\operatorname{int}_V \Pi_V(E)=\varnothing.
\)
\end{proof}
\bigskip

\noindent{\bf AI Disclosure:}
Generative AI tools were used solely for language editing and proofreading. All mathematical results, arguments, proofs, and conclusions were developed, verified, and approved by the authors.
\medskip

\noindent {\bf Acknowledgment:}
The first author is supported by the MOE Taiwan-Caltech Fellowship during the conduct of this
research.  The second and third authors are supported by the National Science and Technology Council (NSTC) under Grant No.~111-2115-M-002-010-MY5.


\bigskip

\begin{thebibliography}{10}
\newcommand{\enquote}[1]{`#1'}
\bibitem{MR1097257}
J. Bourgain, {\it Besicovitch type maximal operators and applications to Fourier analysis}, Geom. Funct. Anal. {\bf 1} (1991), no.~2, 147--187.


\bibitem{BCCLXZ}
   T. Borges, T. Chan, M. Chen, D. Liu, Y. Xi and Y. Zhan, {\it Restriction and Kakeya maximal estimates in $\mathbb{R}^4$}, arXiv: 2511.22824 (2025). 
    
\bibitem{MR3753167}
C. Chen, {\it Projections in vector spaces over finite fields},  Ann. Acad. Sci. Fenn. Math. {\bf 43} (2018), no.~1, 171--185.

\bibitem{MR0447949}
A.~J. C\'ordoba, {\it The Kakeya maximal function and the spherical summation multipliers}, Amer. J. Math. {\bf 99} (1977), no.~1, 1--22.

\bibitem{MR2525780}
Z. Dvir, {\it On the size of Kakeya sets in finite fields}, J. Amer. Math. Soc. {\bf 22} (2009), no.~4, 1093--1097.

\bibitem{MR3004132}
Z. Dvir, {\it Incidence theorems and their applications}, Found. Trends Theor. Comput. Sci. {\bf 6} (2010), no.~4, 257--393 (2012).

\bibitem{MR3143848}
Z. Dvir, S. Kopparty, S. Saraf, and M. Sudan, {\it Extensions to the method of multiplicities, with applications to Kakeya sets and mergers}, SIAM J. Comput. {\bf 42} (2013), no.~6, 2305--2328.

\bibitem{MR2604979}
J.~S. Ellenberg, R. Oberlin and T.~C. Tao, {\it The Kakeya set and maximal conjectures for algebraic varieties over finite fields}, Mathematika. {\bf 56} (2010), no.~1, 1--25.

\bibitem{MR0553579}
K.~J. Falconer, {\it Continuity properties of $k$-plane integrals and Besicovitch sets}, Math. Proc. Cambridge Philos. Soc. {\bf 87} (1980), no.~2, 221--226.

\bibitem{MR3558147}
K.~J. Falconer, J.~M. Fraser and X. Jin, {\it Sixty years of fractal projections}, in Fractal geometry and stochastics V, 3--25, Progr. Probab., 70, Birkh\"auser/Springer, Cham.


\bibitem{MR3272924}
L. Guth and N.~H. Katz, {\it On the Erd\H os distinct distances problem in the plane}, Ann. of Math. (2) {\bf 181} (2015), no.~1, 155--190.

\bibitem{MR4521046}
J. Hickman, K.~M.~K. Rogers and R. Zhang, {\it Improved bounds for the Kakeya maximal conjecture in higher dimensions}, Amer. J. Math. {\bf 144} (2022), no.~6, 1511--1560.

\bibitem{MR3868003}
N.~H. Katz and J. Zahl, {\it An improved bound on the Hausdorff dimension of Besicovitch sets in $\mathbb{R}^3$}, J. Amer. Math. Soc. {\bf 32} (2019), no.~1, 195--259.

\bibitem{MR4201413}
N.~H. Katz and J. Zahl, {\it A Kakeya maximal function estimate in four dimensions using planebrushes}, Rev. Mat. Iberoam. {\bf 37} (2021), no.~1, 317--359.

\bibitem{MR0248779}
R.~P. Kaufman, {\it On Hausdorff dimension of projections}, Mathematika {\bf 15} (1968), 153--155.

\bibitem{MR0409774}
P. Mattila, {\it Hausdorff dimension, orthogonal projections and intersections with planes}, Ann. Acad. Sci. Fenn. Ser. A I Math. {\bf 1} (1975), no.~2, 227--244.

\bibitem{MR2044636}
P. Mattila, {\it Hausdorff dimension, projections, and the Fourier transform}, Publ. Mat. {\bf 48} (2004), no.~1, 3--4.

\bibitem{MR3617376}
P. Mattila, {\it Fourier analysis and Hausdorff dimension}, Cambridge Studies in Advanced Mathematics, 150, Cambridge Univ. Press, Cambridge, 2015.

\bibitem{MR0063439}
J.~M. Marstrand, {\it Some fundamental geometrical properties of plane sets of fractional dimensions}, Proc. London Math. Soc. (3) {\bf 4} (1954), 257--302.



\bibitem{MR1749437}
Y. Peres and W. Schlag, {\it Smoothness of projections, Bernoulli convolutions, and the dimension of exceptions}, Duke Math. J. {\bf 102} (2000), no.~2, 193--251.

\bibitem{MR1363209}
T.~H. Wolff, {\it An improved bound for Kakeya type maximal functions}, Rev. Mat. Iberoam. {\bf 11} (1995), no.~3, 651--674.


\bibitem{MR0575692}
R. Zippel, {\it Probabilistic algorithms for sparse polynomials}, in Symbolic and algebraic computation (EUROSAM '79, Internat. Sympos., Marseille, 1979), pp. 216--226, Lecture Notes in Comput. Sci., 72, Springer, Berlin-New York.
\end{thebibliography}
\end{document}